\theoremstyle{definition}
\newtheorem{thm}{Theorem}[section]
\newtheorem{rem}[thm]{Remark}
\newtheorem{lem}[thm]{Lemma}
\newtheorem{prop}[thm]{Proposition}
\newtheorem{cor}[thm]{Corollary}
\newtheorem{example}[thm]{Example}
\newtheorem{ass}[thm]{Assumption}
\renewcommand{\d}{{\rm d}}
\newcommand{\norm}[1]{\left\Vert #1 \right\Vert}
\newcommand{\N}{\mathbb{N}}
\newcommand{\E}{\mathbb{E}}
\newcommand{\R}{\mathbb{R}}
\newcommand{\cX}{\mathcal{X}}
\newcommand{\cY}{\mathcal{Y}}
\newcommand{\cF}{\mathcal{F}}
\newcommand{\cA}{\mathcal{A}}
\DeclareMathOperator{\eps}{\varepsilon}
\title{Almost sure convergence rates of adaptive increasingly rare Markov chain Monte Carlo}
\author{Julian Hofstadler$^1$ \and Krzysztof {{\L}atuszy\'{n}ski}$^2$  \and Gareth O. Roberts$^2$ \and Daniel Rudolf$^3$} 
\date{%
	$^1$University of Bath\\%
	$^2$University of Warwick\\%
	$^3$University of Passau\\[2ex]%
	\today
}
\begin{document}
\maketitle	
\begin{abstract}
We consider adaptive increasingly rare Markov chain Monte Carlo (MCMC) algorithms, which are adaptive MCMC methods, where the adaptation concerning the ``past'' happens less and less frequently over time. Under a contraction assumption with respect to a Wasserstein-like function we deduce upper bounds of the convergence rate of Monte Carlo sums taking a renormalisation factor into account that is ``almost'' the one that appears in a law of the iterated logarithm. We demonstrate the applicability of our results by considering different settings, among which are those of simultaneous geometric and uniform ergodicity. All proofs are carried out on an augmented state space, including the classical non-augmented setting as a special case. In contrast to other adaptive MCMC limit theory, some technical assumptions, like diminishing adaptation, are not needed.
\end{abstract}

\textbf{Keywords:} adaptive Markov chain Monte Carlo, almost sure convergence rate, Wasserstein contraction

\textbf{Classification:} 65C05; 65C20; 60J22 
\section{Introduction}

Given some probability space $(\cX, \mathcal{F}_\cX, \nu)$ an omnipresent challenge in computational statistics is the approximation of expectations  
\begin{equation}\label{equ:target_expectation}
	\nu(f) =
	\int_\cX f(x) \nu( \d x).
\end{equation}
Here $\nu $ is a desired distribution and $f \colon \cX \to \R$ a suitable integrand of interest. 
It is a common situation that sampling from $\nu$ directly is not possible or computationally infeasible, for instance, since the density of $\nu$ might contain an unknown normalising constant. 

\textit{Markov chain Monte Carlo} (MCMC) methods provide a powerful tool for approximating expectations as given in \eqref{equ:target_expectation}, especially if direct sampling is no option.
The main idea is to construct a Markov chain\footnote{{All appearing random variables are assumed to be defined on a common probability space $(\Omega,\mathcal{F},\mathbb{P})$.}} $(X_n)_{n \in \N_0}$ which admits $\nu$ as limit distribution and 
under rather mild conditions one can show that the Monte Carlo sums\footnote{To ease notation later we do not use the initial point $X_0$ in the Monte Carlo sum.} 
$S_n f = \frac{1}{n}\sum_{j=1}^{n}f(X_j)$
converge to $\nu(f)$ (in some sense), provided that $f\colon \cX \to \R$ is measurable and $\nu(f)$ is finite, cf. \cite{meyn2012markov,douc2018markov}.
Furthermore, explicit error bounds \cite{rudolf2012explicit,latuszynski2015nonasymptotic} as well as concentration inequalities \cite{glynn2002hoeffding,MIASOJEDOW2014hoeffding} are available.

Adaptive MCMC allows to update the single step transition mechanism by taking the whole history into account and therefore leads to non-Markovian processes (in general). 
Numerical experiments demonstrate that if the adaptation is 
	properly done, then
	convergence to the correct limit distribution can be improved.
However, a careless implementation of the adjustment might prevent the algorithm from achieving desirable convergence properties, we refer to \cite[Section 2]{atchade2005adaptive} as well as \cite{latuszynski2014adapfail}.    

The non-Markovian behaviour of adaptive MCMC algorithms leads to rather complicated proof techniques in the literature. 
To ease the mathematical analysis the authors of \cite{chimisov2018air} suggest to use adaptive increasingly rare (AIR) algorithms, where adaptation takes place less and less frequently.  
Indeed, certain technical requirements can be dropped in the theory of those methods.
Moreover, numerical studies, cf. \cite[Section 2.2]{chimisov2018air}, show that AIR versions of  algorithms can perform equally well compared to their purely adaptive counterparts, while saving computational resources.

The goal of this paper is to study the convergence properties of renormalised Monte Carlo sums corresponding to an AIR scheme $(X_n)_{n\in\N_0}$.
Setting $r(n) = \sqrt{n}(\log(n))^{1/2+\eps}$, or $r(n) = n^{1/2 + \eps}$,
our main results, see Section~\ref{sec:Wasserstein_theory}, show 
\begin{equation}\label{equ:main_result_description}
	\lim_{n \to \infty} \left\vert \frac{1}{r(n)} \sum_{j=1}^n\left(  f(X_j) - \nu(f)\right)  \right\vert=0, 
\end{equation}
almost surely, given suitable $\eps>0$ and $f \colon \cX \to \R$.
The main assumption in Section \ref{sec:Wasserstein_theory} is a contraction condition for a Wasserstein-like function. 
We apply our results to different settings in Section \ref{sec:examples} leading to convergence rates for $S_nf$ based on AIR MCMC methods.  
To be more precise, based upon \eqref{equ:main_result_description} it is possible to quantify the path-wise behaviour of $S_n f$, see Corollaries \ref{cor:convergence_speed_uniform_ergodicity} and~\ref{cor:rate_geometric_ergodic}. These show that for any $n \in \N$ and almost all $\omega\in \Omega$ we have 
\begin{equation}\label{equ:almost_sure_bound}
	\left\vert  \frac{1}{n}\sum_{j=1}^n f(X_j(\omega)) - \nu(f)   \right\vert \leq \frac{{C(\omega)}r(n)}{n},
\end{equation}
where $C \colon \Omega \to (0, \infty)$, and $r(n)$ is as in \eqref{equ:main_result_description}. 
Taking into account the choices for $r(n)$ above, the right hand side in \eqref{equ:almost_sure_bound} decreases almost like $1/\sqrt{n}$ if $\eps$ is small.  
These bounds may be practically relevant since they provide information about the speed of convergence of $S_n f$ to $\nu(f)$, even in cases where we only simulate $(X_n)_{n \in \N_0}$ once.

The rate $r(n)$ in \eqref{equ:main_result_description} involves some number $\eps>0$. 
As is discussed in Section \ref{sec:examples} this parameter depends on  
\begin{itemize}
	\item[1)] the size of the windows between consecutive adaptations, and
	\item[2)] the complexity of the integrand under consideration.
\end{itemize}

The main idea of the proofs is to use a martingale approximation based on solving Poisson's equation, which is known to be a powerful tool, we refer for instance to \cite{andrieu2006ergodicity,saksman2010ergodicity,atchade2013Bayesian}.
See also \cite{atchade2010limit,atchade2012limit} where a generalised Poisson's equation was used to derive ergodicity results, strong laws of large numbers, and central limit theorems for sub-geometrically ergodic kernels.
While the martingale part is typically easy, there is also a remainder part, which for purely adaptive schemes can be tricky to control.
Usually one requires technical assumptions, however, 
for some algorithms, e.g. adaptive Metropolis \cite{haario2001adaptive}, we know these conditions are satisfied and limit theory is available \cite{andrieu2006ergodicity,saksman2010ergodicity}.

In this paper we focus on AIR versions of algorithms. 
An important observation is that for this class of methods most of the (difficult) terms in the martingale approximation cancel. 
Hence some rather technical (necessary) conditions (cf. \cite{atchade2005adaptive,laitinen2024invitationadaptivemarkovchain}), e.g. \textit{diminishing adaptation}, are not required to show that the limit in \eqref{equ:main_result_description} is almost surely zero.

Our analysis of AIR MCMC is done in an augmented state space setting, including the classical (non augmented) regime as special case.
This is motivated by the recent work \cite{Pompe_2020Framework}, where it is shown that moving to an augmented state space allows to tackle difficulties like multimodality of the target $\nu$.

Let us comment on how our results fit into the existing literature.
Different results for the ergodicity of adaptive MCMC methods in general and the corresponding Monte Carlo sums are available. 
There are laws of large numbers \cite{haario2001adaptive,atchade2005adaptive, andrieu2006ergodicity,roberts2007coupling,saksman2010ergodicity,fort2011convergence,Pompe_2020Framework}, central limit theorems \cite{andrieu2006ergodicity,saksman2010ergodicity} as well as results about convergence to the correct target measure \cite{roberts2007coupling,fort2011convergence}.
In \cite{chimisov2018air} the convergence of AIR MCMC schemes is provided and,
moreover,  the authors of \cite{andrieu2007efficiency} studied the efficiency of adaptive MCMC compared to an ``optimal'' MCMC method.

Moreover, in \cite{andrieu2007efficiency,atchade2015convergence} quantitative results concerning the ergodicity of an adaptive chain are deduced.  
It is shown there that for different algorithms one has $\norm{\mathbb{P}(X_n \in \cdot) - \nu}_{tv} \leq C/n$, where $\norm{\cdot}_{tv}$ denotes the 
total variation distance.
Bounds for the mean squared error of traditional and AIR algorithms are 
reported in
\cite{atchade2015convergence, chimisov2018air}.
In particular, both papers show that, in different settings, rates of order $1/n$ are possible.  
In contrast to that, our bounds in \eqref{equ:almost_sure_bound} are understood path-wise. 
For small $\eps$ the rate $r(n)$ in \eqref{equ:main_result_description} is already close to that one in the law of the iterated logarithm, which, at least for the special case where $(X_n)_{n \in \N}$ consists of i.i.d. 
random variables
with $X_1 \sim \nu$, cannot be improved. 

Limit theory is usually proven under uniform or geometric convergence assumptions and we are not aware of results that hold in a Wasserstein contraction setting.
Furthermore, convergence of renormalised Monte Carlo sums as in \eqref{equ:main_result_description} (or also laws of iterated logarithm) are to the best of our knowledge not known for adaptive MCMC methods.  
It is however worth mentioning that \cite[Proposition 6]{saksman2010ergodicity} could be used to obtain similar results for stochastic approximation type algorithms in a $V$-simultaneously geometrically ergodic setting, see the discussion at the end of Section \ref{sec:Wasserstein_theory} and Example~\ref{ex:Metropolis_geometric_ergodic}.

The goal of the present paper is to study path-wise convergence in a more general setting:
Under a contraction condition 
with respect to (w.r.t.)
Wasserstein-like function we provide results describing the convergence behaviour of \eqref{equ:main_result_description}, see Theorems \ref{thm:limit_result_bounded_eccentricity_large_beta}, \ref{thm:limit_result_bounded_eccentricity_small_beta}, \ref{thm:limit_result_lyapunov}.  
These are applied to different examples, {including the simultaneous geometric and uniform ergodicity cases}, which are of interest in {the} adaptive  MCMC literature. 

The outline of the paper is as follows. In Section \ref{sec:preliminaries} we introduce some notation and general concepts which are used throughout the paper. 
Theoretical results are stated and proven in Section \ref{sec:Wasserstein_theory} and applied to several examples in Section \ref{sec:examples}. 
The appendix contains some auxiliary material.

\section{Preliminaries}\label{sec:preliminaries}

In this section we provide all required objects to establish our limit theorems later. 
We start with some general notation and assumptions. 

Following \cite{Pompe_2020Framework}, we consider an augmented state space, denoted by $\cY = \cX \times \Phi$, where $(\Phi, \mathcal{F}_\Phi)$ is some ``auxiliary'' measurable space and $\mathcal{F}_{\cY} = \mathcal{F}_\cX \otimes \mathcal{F}_\Phi$. 
To avoid problems concerning measurability, all involved spaces are assumed to be Polish with countably generated $\sigma$-algebra.  
Besides that, there are no particular requirements on $(\Phi, \mathcal{F}_\Phi)$ and, moreover, different choices for $\mathcal{F}_\cY$ may be feasible as well.
Note that choosing $\Phi $ to be a singleton corresponds to the classical non-augmented case.  

Given some measurable space $(\mathcal{I}, \mathcal{F}_{\mathcal{I}})$ we consider 
\begin{itemize}
	\item[1)] a family of probability distributions $\{{\pi}_\gamma\}_{\gamma \in \mathcal{I}}$ on $(\cY, \mathcal{F}_{\cY})$, and
	\item[2)] a family of transition kernels $\{ {P}_\gamma \}_{\gamma \in \mathcal{I}}$ on $(\cY, \mathcal{F}_{\cY})$.
\end{itemize}

To be more precise, by 2) we mean that for any $\gamma \in \mathcal{I}$ the mapping ${P}_\gamma \colon \cY \times \mathcal{F}_{\cY} \to [0,1]$ is a transition kernel. We assume that ${\pi}_\gamma$ has marginal $\nu$ for any $\gamma \in \mathcal{I}$, that is, ${\pi_\gamma}(B \times \Phi) = \nu(B)$ for any $B \in \mathcal{F}_\cX$, and is the invariant distribution of ${P}_\gamma({y}, \cdot )$.
Moreover, we assume that any kernel $P_\gamma$ is $\psi$-irreducible and aperiodic, see \cite{meyn2012markov} for definitions, and that the mapping $(y,\gamma,A) \mapsto P_\gamma(y, A)$ is a transition kernel on $(\cY \times \mathcal{I}, \mathcal{F}_\cY \otimes \mathcal{F}_{\mathcal{I}})$.

	Let us briefly comment on the role of the auxiliary space $\Phi$. 
	As the name suggests, it contains ``auxiliary variables'' which, in some form, should be useful for the computation of $\nu(f)$. 
	For instance, for multimodal $\nu$, with modes $x_1 , \dots , x_T$, the authors of \cite{Pompe_2020Framework} set $\Phi= \{1, \dots, T\}$, and $\nu$ is targeted by an algorithm performing ``local'' and ``jump'' moves: 
	Given $(x,i) \in \cX \times \Phi$ as current point, a local move keeps $i \in \Phi$ fixed and only updates $x$, i.e. we target the current mode. 
	A jump move on the other hand keeps $x$ fixed and allows to target a different mode afterwards, cf. \cite[Section 2]{Pompe_2020Framework}. 

\subsection{Adaptive and AIR MCMC}\label{subsec:AIR_MCMC} 
We consider an adaptive MCMC chain, say $(Y_n, \Gamma_n)_{n \in \N_0}$, evolving in the space $(\cY, \mathcal{I})$, where $(\Gamma_n)_{n \in \N_0}$ denotes the sequence of (random) adaptation parameters. 
In other words, $\cY$ is the state space of $(Y_n)_{n \in \N_0}$ and $\mathcal{I}$ that of $(\Gamma_n)_{n \in \N_0}$.  
We assume that there exists some underlying (sufficiently regular) probability space $(\Omega, \mathcal{F}, \mathbb{P})$ and set $\mathcal{F}_{n} = \sigma\left(Y_0 , \dots, Y_n , \Gamma_0 , \dots, \Gamma_n \right)$, such that $(\mathcal{F}_n)_{n \in \N}$ is the natural filtration of $(Y_n, \Gamma_n)_{n\in \N_0}$.
We assume that $(Y_0, \Gamma_0) \equiv (y_0, \gamma_0 )$ for some initial values $y_0 \in \cY$ and $\gamma_0 \in \mathcal{I}$.

The one step transition probabilities of $(Y_n )_{n \in \N_0}$ are specified via 
\begin{equation}\label{equ:adaptive_MCMC_dynamics}
	\mathbb{P}\left[Y_{n+1} \in B \vert Y_n = {y}, \Gamma_n= \gamma \right] = {P}_\gamma(y, B),
\end{equation}
where $B \in \cF_{\cY}$. 
Moreover, the adaptation parameter $\Gamma_n$ is chosen according to some update rule\footnote{The functions $U_n$ are assumed to be measurable.} $U_n \colon \cY^{n+1} \times \mathcal{I}^n \to \mathcal{I}$, allowing to ``improve'' the kernel of $Y_{n+1}$ based on the already known values of $Y_0, \dots, Y_n$ and $\Gamma_0, \dots, \Gamma_{n-1}$. 
That is $\Gamma_n = U_n (Y_0, \dots, Y_n , \Gamma_0 , \dots,\Gamma_{n-1})$.

Using the regularity of all involved spaces and standard techniques of measure theory we have that $\mathbb{P}$-almost surely
\begin{equation}\label{equ:conditional_expectation}
	\E \left[g (Y_{n+1}) \vert \cF_{n}\right] = P_{\Gamma_n} g (Y_n),
\end{equation}
where $g \colon \cY \to \R$ is an arbitrary measurable function satisfying $P_\gamma g(y) < \infty$ for any $(y,\gamma)\in\cY\times\mathcal{I}$.

We assume that for any $n \in \N$ the mapping $\omega \mapsto \pi_{\Gamma_n(\omega)}(g)$ is measurable for any (measurable) function $g \colon \cY \to \R$ such that $\pi_\gamma (\vert g\vert ) < \infty$ for all $\gamma \in \mathcal{I}$.

Since $(Y_n)_{n \in \N_0}$ evolves in the augmented space $\cY = \cX \times \Phi$, each state has the form $Y_n = (X_n, \phi_n)$, with an $\cX$-valued component $X_n$ and a $\Phi$-valued component $\phi_n$.
Given some measurable function of interest $f \colon \cX \to \R$ with $\nu(\vert f \vert) < \infty $, we set $h= h_f = f \cdot \mathds{1}_\Phi$. 
Using the fact that $\pi_\gamma$ has marginal $\nu$ we obtain the following 
\begin{equation}\label{equ:S_n_on_augmented_space}
	S_n f - \nu(f) = \frac{1}{n}\sum_{j=1}^n f(X_j) - \nu(f)= \frac{1}{n}\sum_{j=1}^n \left( h(Y_j) - {\pi}_{\Gamma_{j-1}}(h)\right),
\end{equation}
holding $\mathbb{P}$-a.s. for any $n\in \N$. 

In this paper we focus on AIR MCMC algorithms, as introduced in \cite{chimisov2018air}.
This class of methods allows to update $\Gamma_n$ only at specific times, in contrast to purely adaptive schemes, where $\Gamma_n$ may change after each step.
Given $\beta>0$ we set $k_j = \lceil j^\beta \rceil$ and allow adaptation only at the times 
\[
T_m = \sum_{j=1}^m k_j.
\]
That is, for $i_1, i_2 \in \{ T_j , \dots, T_{j+1}-1\}$ (with $j \in \N$) we have $\Gamma_{i_1} = \Gamma_{i_2}$.  

A simple, yet important observation is that for any $\beta>0$ there exist constants $c_\beta, C_\beta \in \R$ such that 
\begin{equation}\label{equ:AIR_MCMC_steps_bound}
	c_\beta m^{1+\beta}  \leq 
	T_m \leq C_\beta m^{1+\beta}.
\end{equation}
In particular, this means that we adapt less and less frequently.

Let us comment on the possible choices for $\beta \in (0, \infty)$. The authors of \cite{chimisov2018air} compared the performance of	AIR and traditional adaptive random walk Metropolis algorithm in numerical experiments, see Section 2.2 of the mentioned paper. The target distribution in these simulations was 100 dimensional, and for $\beta \in [1,2]$ the authors of \cite{chimisov2018air} report that AIR algorithms perform similarly well as purely adaptive methods.

\subsection{Martingale approximation}
Our results shall be proven using a martingale approximation technique based on solutions of Poisson's equation.  
Providing a powerful tool for the analysis of general state space Markov chains, see e.g. \cite{meyn2012markov,douc2018markov}, Poisson's equation was later also used in the adaptive setting, e.g. in \cite{andrieu2006ergodicity,saksman2010ergodicity,atchade2013Bayesian}. See also \cite{atchade2010limit,atchade2012limit} for further generalisations.

For a function of interest $h \colon \cY \to \R$ as well as $\gamma \in \mathcal{I}$ we say $u_\gamma\colon \cY \to \R$ is a solution of Poisson's equation if
\[
u_\gamma (y) -P_\gamma u_\gamma(y) 
= 
h(y) - {\pi}_\gamma(h)
\]
for any $y \in \cY$.
Provided such an $u_\gamma$ exists for any $\gamma \in \mathcal{I}$ we obtain the following decomposition for the Monte Carlo sum of an adaptive MCMC method driven by \eqref{equ:adaptive_MCMC_dynamics}:
\begin{align}\label{equ:martingale_decomposition_general}
	\sum_{j=1}^n \left( h(Y_j)- {\pi}_{\Gamma_{j-1}}(h) \right) 
	&= 
	\sum_{j=1}^n \left( {u}_{\Gamma_{j-1}}(Y_j)- {P}_{\Gamma_{j-1}}{u}_{\Gamma_{j-1}}(Y_{j-1}) \right)\nonumber \\
	&\qquad+ \sum_{j=1}^n \left( {u}_{\Gamma_{j}}(Y_j)- {u}_{\Gamma_{j-1}}(Y_{j}) \right) \nonumber\\
	&\qquad+h(Y_n) - h(Y_0) 
	+{u}_{\Gamma_0}(Y_0) - {u}_{\Gamma_{n}}(Y_n).
\end{align}

Recall that for an AIR scheme adaptation only occurs at the times $\{T_j\}_{j \in \N_0}$ with $T_0 := 0$. 
Assume $T_m \leq n+1 < T_{m+1}$,
then, since $u_{\Gamma_{i+1}}(Y_{i+1}) - u_{\Gamma_{i}}(Y_{i+1}) =0$ for any $i \in \{T_{j}, \dots, T_{j+1}-2\}$, equation \eqref{equ:martingale_decomposition_general} simplifies to 
\begin{align}\label{equ:martingale_decomposition_air_MCMC_general}
	\sum_{j=1}^n \left( h(Y_j)- {\pi}_{\Gamma_{j-1}}(h) \right) 
	&=
	\sum_{j=1}^n \left( {u}_{\Gamma_{j-1}}(Y_j)- {P}_{\Gamma_{j-1}}{u}_{\Gamma_{j-1}}(Y_{j-1}) \right)\nonumber \\
	&\qquad+ \sum_{j=0}^{m-1} \left( u_{\Gamma_{T_{j+1}}}(Y_{T_{j+1}}) - u_{\Gamma_{T_j}}(Y_{T_{j+1}}) \right)  \nonumber\\
	&\qquad+h(Y_n) - h(Y_0) 
	+{u}_{\Gamma_0}(Y_0) - {u}_{\Gamma_{n}}(Y_n).
\end{align}

Here ``simplification'' refers to the fact that \eqref{equ:martingale_decomposition_air_MCMC_general} contains less summands of the form ${u}_{\Gamma_{j}}(Y_j)- {u}_{\Gamma_{j-1}}(Y_{j})$ compared to \eqref{equ:martingale_decomposition_general}.

Let us define the following quantities:
\begin{align*}
	M_n &= \sum_{j=1}^n \left( {u}_{\Gamma_{j-1}}(Y_j)- {P}_{\Gamma_{j-1}}{u}_{\Gamma_{j-1}}(Y_{j-1}) \right); \\
	\Delta_j &:= {u}_{\Gamma_{j-1}}(Y_j)- {P}_{\Gamma_{j-1}}{u}_{\Gamma_{j-1}}(Y_{j-1}); \\
	\text{and} \qquad R_m &:= \sum_{j=0}^{m-1} \left( u_{\Gamma_{T_{j+1}}}(Y_{T_{j+1}}) - u_{\Gamma_{T_j}}(Y_{T_{j+1}}) \right).
\end{align*} 
It is worth mentioning that, if $\E\vert M_n \vert <\infty$, then
	$\E [M_n\vert \mathcal{F}_{n-1}] = M_{n-1}$ almost surely.

\subsection{Distance-like and Wasserstein-like functions}
We consider a measurable space $(\mathcal{A}, \mathcal{F}_\mathcal{A})$, which is Polish w.r.t. some metric $d_\cA$ and has a countably generated $\sigma$-algebra. 

Following \cite{hairer2011asymptotic}, we call a function $d \colon \cA \times \cA \to \R^+$ \textit{distance-like} if: 
	\begin{itemize}
		\item[a)] For any $x,y\in \cA$ holds $d(x,y) = d(y,x)$. 
		\item[b)] We have $d(x,y) =0 \;\Longleftrightarrow \; x=y$.
		\item[c)] It is lower semi-continuous w.r.t. the product topology of $d_{\mathcal{A}}$.
	\end{itemize}
	
	A non-negative function which satisfies a) and b) is often\footnote{{The definition of a semi-metric seems to be used inconsistently in the literature, as some authors call $\widetilde{d} \colon \mathcal{A} \times \mathcal{A} \to \R^+$ a semi-metric if it satisfies a), triangle inequality and $\widetilde{d}(x,x) =0$ for any $x \in \mathcal{A}$ (instead of b)).}} called a \textit{semi-metric}. 
	Hence, a distance-like function is a semi-metric which additionally has to be lower semi-continuous (w.r.t. $d_{\mathcal{A}}$).

Given some distance-like function $d$ on $\cA$ and probability measures $\mu_1, \mu_2$ on $(\cA, \mathcal{F}_\cA)$ we define
\[
\mathcal{W}(\mu_1, \mu_2):=\inf_{\xi \in \mathcal{C}(\mu_1, \mu_2)} \int_{\cA^2} d(x,y) \xi(\d x , \d y) ,
\]
where $\mathcal{C}(\mu_1, \mu_2)$ is the set of all couplings of $\mu_1, \mu_2$.
As argued in \cite{hairer2011asymptotic} below Definition 4.3, $\mathcal{W}$ is a reasonable tool for quantifying the difference between probability measures.
Following \cite{hosseini2023spectral} we call $\mathcal{W}$ a \textit{Wasserstein-like} function (w.r.t. $d$).
Note, if $d$ is a metric, then $\mathcal{W}$ corresponds to the usual $1$-Wasserstein distance.

A function $\psi \colon \cA \to \R$ is called Lipschitz w.r.t. the distance-like function $d$ if there exists some constant $L \in (0, \infty)$ such that  
	\[
	\vert \psi(x) - \psi(y) \vert \leq L \cdot  d(x,y),
	\]
	for any $x,y \in \cA$.
We write $\text{Lip}_1$ for the set of all functions that are Lipschitz (w.r.t. $d$) with constant $1$. 
Exploiting lower semi-continuity of $d$ and that the underlying space is Polish, we obtain the following important inequality.
\begin{lem}\label{lem:duality_inequality}
	For $\mathcal{W}$ w.r.t. distance-like function $d$ we have
	\[
	\mathcal{W}(\mu_1, \mu_2) \geq \sup_{\psi \in \text{Lip}_1} \left\vert  \int_\cA \psi(x) \mu_1(\d x) - \int_\cA \psi(y) \mu_2(\d y)  \right\vert.
	\]
\end{lem}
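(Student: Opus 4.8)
The plan is to establish the one-sided Kantorovich–Rubinstein bound directly from the definition of $\mathcal{W}$ as an infimum over couplings, together with the Lipschitz property of the test function $\psi$. Fix $\psi \in \text{Lip}_1$, so that $|\psi(x) - \psi(y)| \le d(x,y)$ for all $x,y \in \cA$. For any coupling $\xi \in \mathcal{C}(\mu_1, \mu_2)$ one has
\[
\left\vert \int_\cA \psi\, \d\mu_1 - \int_\cA \psi\, \d\mu_2 \right\vert
= \left\vert \int_{\cA^2} \bigl( \psi(x) - \psi(y) \bigr)\, \xi(\d x, \d y) \right\vert
\le \int_{\cA^2} |\psi(x) - \psi(y)|\, \xi(\d x, \d y)
\le \int_{\cA^2} d(x,y)\, \xi(\d x, \d y),
\]
where the first equality uses that the marginals of $\xi$ are $\mu_1$ and $\mu_2$. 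Taking the infimum over all $\xi \in \mathcal{C}(\mu_1, \mu_2)$ on the right-hand side yields $|\int \psi\, \d\mu_1 - \int \psi\, \d\mu_2| \le \mathcal{W}(\mu_1, \mu_2)$, and since this holds for every $\psi \in \text{Lip}_1$, taking the supremum over such $\psi$ gives the claim.

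A few routine technical points need attention. First, one should check that $\psi$ is integrable with respect to $\mu_1$ and $\mu_2$ so that the differences of integrals make sense; if $\mathcal{W}(\mu_1,\mu_2) = +\infty$ the inequality is trivial, so we may assume there is a coupling $\xi$ with $\int d(x,y)\,\xi(\d x,\d y) < \infty$, and then the Lipschitz bound $|\psi(x)| \le |\psi(x_0)| + d(x,x_0)$ for a fixed reference point $x_0$ shows $\psi$ has finite integral against both marginals (using that $\int d(x,x_0)\,\mu_1(\d x) \le \int d(x,y)\,\xi(\d x,\d y) + \int d(y,x_0)\,\mu_2(\d y)$ is controlled, at least after verifying one marginal moment is finite — alternatively one truncates $\psi$). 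Second, measurability of $(x,y) \mapsto \psi(x) - \psi(y)$ is immediate since $\psi$ is measurable (Lipschitz w.r.t.\ $d$, and $d$ is lower semi-continuous hence measurable, so $\psi$ is continuous, in fact, w.r.t.\ any topology making $d$-balls open — but measurability suffices and follows from the definition).

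I do not expect a serious obstacle here: unlike the full Kantorovich duality (equality), only the ``easy'' direction is claimed, and it requires no compactness, no Hahn–Banach argument, and no appeal to lower semi-continuity of $d$ beyond ensuring measurability. The one place requiring a little care is the integrability/finiteness bookkeeping when $\mathcal{W}$ is finite but the individual moments $\int d(x,x_0)\,\mu_i(\d x)$ are not obviously finite; the clean way around this is to first prove the inequality for bounded Lipschitz $\psi$ (where all integrals are trivially finite) and then pass to general $\psi \in \text{Lip}_1$ by a truncation/monotone-convergence argument, or simply to note that the case $\mathcal{W}(\mu_1,\mu_2)=\infty$ is vacuous and in the finite case a single good coupling already forces the relevant one-dimensional moments to be finite.
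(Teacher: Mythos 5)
Your proof is correct and follows essentially the same route as the paper: fix $\psi \in \text{Lip}_1$, pass to a coupling, bound $|\psi(x)-\psi(y)|$ by $d(x,y)$, and conclude. The only (harmless) difference is that you take the infimum over all couplings at the end, whereas the paper first invokes lower semi-continuity of $d$ to pick an optimal coupling; your variant is marginally more elementary since it needs no existence of an optimal coupling, and your remarks on integrability and the trivial case $\mathcal{W}(\mu_1,\mu_2)=\infty$ are fine but not essential.
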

\begin{proof}
	Let $\psi \in \text{Lip}_1$. 
	By lower semi-continuity of $d$ we have that there exists an optimal coupling $\xi$, i.e. $\mathcal{W}(\mu_1,\mu_2) =  \int_\cA d(x,y) \xi(\d x, \d y)$.  
	Thus 
	\begin{align*}
		\left\vert \int_\cA \psi(x) \mu_1(\d x) - \int_\cA \psi(y) \mu_2(\d y)  \right\vert
		=
		\left\vert\int_\cA\int_\cA \psi(x)  -  \psi(y) \xi(\d x ,\d y)  \right\vert\\
		\leq 
		\int_\cA\int_\cA d(x,y) \xi(\d x,\d y) = \mathcal{W}(\mu_1, \mu_2).  
	\end{align*}
\end{proof}
The well-known Kantorovich-Rubinstein duality states that if $d$ is a (lower semi-continuous) metric, then one would have equality in Lemma \ref{lem:duality_inequality}.

\section{Convergence rates via Wasserstein contraction}\label{sec:Wasserstein_theory}
In this section we provide limit theorems based on a contraction assumption for a Wasserstein-like function. 

Recall that $\cY$ is Polish w.r.t. to some metric $d_\cY$ and let $d \colon \cY \times \cY \to \R^+$ be a distance-like function.
For $\gamma \in \mathcal{I}$ we define the following quantities: 
\begin{itemize}
	\item[1)] The eccentricity (see \cite{Joulin2010Curvature}) is defined as 
	\[
	E_\gamma (y) = \int_\cY d(y, y' ) \pi_\gamma(\d y').
	\] 
	\item[2)] Given $\ell \in \N$, the Wasserstein contraction coefficient\footnote{For the sake of brevity we refer to this as Wasserstein contraction coefficient, even if $\mathcal{W}$ is ``only'' a Wasserstein-like function.} for $P_\gamma^\ell$ is 
	\[
	\tau(P_\gamma^\ell) = \sup_{\substack{y_1, y_2 \in \cY\\y_1 \neq y_2}} \frac{\mathcal{W} (P_\gamma^\ell (y_1, \cdot ), P_\gamma^\ell(y_2, \cdot))}{d(y_1, y_2)}.
	\]
\end{itemize}

Reviewing the proofs of \cite[Proposition 14.3 and Proposition 14.4]{dobrushin96} we see that also for $\mathcal{W}$ based on a distance-like function one has 
\begin{itemize}
	\item[i)] submultiplicativity, that is $\tau(P_\gamma P_{\gamma'}) \leq \tau(P_\gamma) \tau(P_{\gamma'})$, and,
	\item[ii)] contractivity, that is $\mathcal{W}(\mu_1 P_\gamma , \mu_2 P_\gamma) \leq \tau(P_\gamma) \mathcal{W}(\mu_1, \mu_2)$,
\end{itemize}
for any $\gamma, \gamma'\in \mathcal{I}$ and any probability measures $\mu_1, \mu_2$ on $(\cY , \mathcal{F}_\cY$).

The following contraction condition is our main assumption. 
\begin{ass}\label{assumption_main}
	For each $\gamma \in \mathcal{I}$ we assume that $\pi_\gamma$ is the invariant distribution of $P_\gamma$ and that 
	\[
	\tau(P_\gamma) \leq M \qquad \text{and} \qquad
	\tau(P_\gamma^{k_0}) \leq \tau,
	\]
	with $M \in [1, \infty)$, $\tau \in [0,1)$ and $k_0 \in \N$ independent of $\gamma$.
\end{ass}

	Assumption \ref{assumption_main} formulates a (uniform) contraction property w.r.t. a Wasserstein-like function.
	As we elaborate in Section \ref{sec:examples}, a suitable choice of the distance-like function $d$ allows us to work within the simultaneously uniformly ergodic, or geometric ergodic setting, which are classical in the adaptive MCMC literature.
	Hence the Wasserstein contraction assumption can be seen as a more general framework, which allows to recover classical examples as special cases. 
	
	Assumption \ref{assumption_main} requires all bounds to be uniform in $\gamma$, which is somewhat restrictive. 
	Yet, in Section \ref{sec:examples} we provide examples where it is satisfied. 
Dropping the uniformity may lead to algorithms which do no longer obey a law of large numbers, cf. Appendix~\ref{sec:appendix_counter_example}. 
		Loosely formulated, Assumption~\ref{assumption_main} prevents us from using 
		``arbitrarily bad'' kernels. However, under
	additional conditions it may be possible to weaken Assumption \ref{assumption_main}. 
	For instance, one could consider stochastic approximation type algorithms, see \cite{andrieu2006ergodicity,saksman2010ergodicity}, where, roughly speaking, the parameter sequence $(\Gamma_n)_{ n \in \N_0}$ evolves along subsets $\mathcal{K}_1 \subseteq \mathcal{K}_2 \subseteq \mathcal{K}_3 \subseteq \dots \subseteq \mathcal{I}$.
	Similar concepts appear also in different areas of MCMC theory, see e.g. \cite{Fort2003convergence}.
	In some sense, Assumption~\ref{assumption_main} allows to consider more general adaptation mechanisms, at the cost of having a stronger assumptions on the family of kernels $\{P_\gamma\}_{\gamma\in \mathcal{I}}$.

\begin{rem}
	In what follows expectations of the form  $\E [E_{\Gamma_{j}}(Y_k)]$, for some $j,k \in \N_0$, appear.
	Let us emphasize that for any $j, k \in \N_0$ the quantity $E_{\Gamma_{j}}(Y_k)$ is a well-defined, non-negative random variable, see  Appendix~\ref{sec:measurability_of_E_Gamma}. 
\end{rem}
\subsection{Solving Poisson's equation}
The upcoming result employs the ``Kantorovich-Rubinstein inequality'' of Lemma \ref{lem:duality_inequality}, to bound $P_\gamma^\ell f(x) - \pi_\gamma(f)$, where $\ell \in \N$ and $f$ is Lipschitz, by $\tau(P_\gamma^\ell)$. 
	This allows to solve Poisson's equation using Assumption \ref{assumption_main}, which is vital for the martingale approximation technique described in Section \ref{sec:preliminaries}. 

\begin{prop}\label{prop:poisson_equation_wasserstein_setting}
	Let $\gamma \in \mathcal{I}$, $f \colon \cY \to \R$ be $\pi_\gamma$ integrable and Lipschitz with constant $L \in (0, \infty)$ and let $E_\gamma (y) < \infty$ for any $y \in \cY$. 
	If Assumption~\ref{assumption_main} is true, then the function $u_\gamma\colon \cY \to \R$ given via 
	\[
	u_\gamma(y ) = \sum_{\ell=0}^\infty ( P_\gamma^\ell  f(y) - \pi_\gamma(f)) 
	\]   
	is well-defined and we have
	\begin{itemize}
		\item[i)] $\vert u_\gamma (y) \vert \leq \frac{k_0L M^{k_0} E_\gamma(y)}{1- \tau}$ for any $y \in \cY$,
		\item[ii)] $u_\gamma (y) - P_\gamma u(y) = f(y) - \pi_\gamma(f)$ for any $y \in \cY$.
	\end{itemize}
\end{prop}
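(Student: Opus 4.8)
The plan is to build $u_\gamma$ as the Neumann-type series $\sum_{\ell\ge 0}\big(P_\gamma^\ell f-\pi_\gamma(f)\big)$ and to control it term by term through the eccentricity, so that the series converges absolutely with the asserted bound and then manifestly solves Poisson's equation. Throughout I would assume without loss of generality that $M\ge 1$ (the case $M<1$ being subsumed by replacing $M$ with $\max\{M,1\}$, which keeps Assumption~\ref{assumption_main} valid and only enlarges the bound).

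First I would check that each iterate $P_\gamma^\ell f$ is Lipschitz with constant $L\tau(P_\gamma^\ell)$: for $y_1\neq y_2$, since $f/L\in\text{Lip}_1$, Lemma~\ref{lem:duality_inequality} applied to the measures $P_\gamma^\ell(y_1,\cdot)$ and $P_\gamma^\ell(y_2,\cdot)$ gives $|P_\gamma^\ell f(y_1)-P_\gamma^\ell f(y_2)|\le L\,\mathcal{W}\big(P_\gamma^\ell(y_1,\cdot),P_\gamma^\ell(y_2,\cdot)\big)\le L\tau(P_\gamma^\ell)\,d(y_1,y_2)$. Next, since $\pi_\gamma$ is $P_\gamma$-invariant, $\pi_\gamma(f)=\int_\cY P_\gamma^\ell f\,\d\pi_\gamma$, and centering by this identity yields
\[
\big|P_\gamma^\ell f(y)-\pi_\gamma(f)\big| = \left|\int_\cY\big(P_\gamma^\ell f(y)-P_\gamma^\ell f(y')\big)\,\pi_\gamma(\d y')\right| \le L\tau(P_\gamma^\ell)\int_\cY d(y,y')\,\pi_\gamma(\d y') = L\tau(P_\gamma^\ell)\,E_\gamma(y).
\]
This is the conceptual heart of the argument: combining the contraction coefficient with invariance upgrades it to a genuine pointwise estimate, and implicitly it uses that the only coupling of $\delta_y$ with $\pi_\gamma$ is the product measure, so $\mathcal{W}(\delta_y,\pi_\gamma)=E_\gamma(y)$.

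Then I would make $\tau(P_\gamma^\ell)$ decay geometrically: writing $\ell=qk_0+s$ with $q\in\N_0$ and $0\le s<k_0$, submultiplicativity together with Assumption~\ref{assumption_main} gives $\tau(P_\gamma^\ell)\le\tau(P_\gamma^{k_0})^q\,\tau(P_\gamma^s)\le\tau^q M^{k_0}$ (using $\tau(P_\gamma^s)\le M^s\le M^{k_0}$ for $s\ge 1$ and $\tau(P_\gamma^0)=1\le M^{k_0}$). Grouping the series into blocks of length $k_0$ then gives $\sum_{\ell\ge 0}\tau(P_\gamma^\ell)\le k_0M^{k_0}/(1-\tau)$, hence $\sum_{\ell\ge 0}|P_\gamma^\ell f(y)-\pi_\gamma(f)|\le\frac{k_0LM^{k_0}}{1-\tau}E_\gamma(y)<\infty$ for every $y$; this proves well-definedness of $u_\gamma$ and the bound claimed in i). For ii) I would argue with the partial sums $u_\gamma^{(N)}=\sum_{\ell=0}^N(P_\gamma^\ell f-\pi_\gamma(f))$: applying $P_\gamma$, which fixes the constant $\pi_\gamma(f)$, and re-indexing gives $P_\gamma u_\gamma^{(N)}(y)=u_\gamma^{(N+1)}(y)-(f(y)-\pi_\gamma(f))$, whose right-hand side converges to $u_\gamma(y)-(f(y)-\pi_\gamma(f))$ as $N\to\infty$; passing to the limit on the left so as to identify it with $P_\gamma u_\gamma(y)$, by dominated convergence with the $N$-uniform majorant $\frac{k_0LM^{k_0}}{1-\tau}E_\gamma(\cdot)$, then yields $u_\gamma(y)-P_\gamma u_\gamma(y)=f(y)-\pi_\gamma(f)$.

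The step I expect to be the main obstacle is this last interchange of $P_\gamma$ with the infinite series: it requires $P_\gamma E_\gamma(y)<\infty$ — equivalently, that $P_\gamma u_\gamma(y)$ is well defined — together with integrability of $f$ and of the iterates $P_\gamma^\ell f$ against $P_\gamma^\ell(y,\cdot)$, all of which I would extract from the Lipschitz bound $|f(y')|\le|f(y)|+Ld(y,y')$ and finiteness of the relevant eccentricities under the standing assumptions of the paper; in the concrete settings of Section~\ref{sec:examples} this finiteness is immediate. The remaining steps are comparatively routine.
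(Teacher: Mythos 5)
Your construction of $u_\gamma$ and the proof of i) are correct and essentially the paper's argument: the same splitting $\ell=qk_0+s$, submultiplicativity and geometric decay of $\tau(P_\gamma^\ell)$, and the same constant $k_0LM^{k_0}/(1-\tau)$; centring $P_\gamma^\ell f$ by invariance of $\pi_\gamma$ rather than via $\mathcal{W}(P_\gamma^\ell(y,\cdot),\pi_\gamma)\le\tau(P_\gamma^\ell)\,\mathcal{W}(\delta_y,\pi_\gamma)\le\tau(P_\gamma^\ell)E_\gamma(y)$ is a harmless variant.

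For ii), however, the obstacle you flag is a genuine gap, and the way you propose to close it does not work under the stated hypotheses. Dominated convergence with the majorant $\frac{k_0LM^{k_0}}{1-\tau}E_\gamma$ requires $P_\gamma E_\gamma(y)<\infty$, but the proposition only assumes $E_\gamma(y)=\int_\cY d(y,y')\,\pi_\gamma(\d y')<\infty$ for each $y$, i.e. integrability of $d(y,\cdot)$ against $\pi_\gamma$; nothing controls integrals against $P_\gamma(y,\cdot)$. Since $d$ is merely distance-like, neither $y'\mapsto d(y,y')$ nor $y'\mapsto E_\gamma(y')$ is Lipschitz with respect to $d$ (that would need a triangle inequality), so the contraction coefficients cannot be used to transfer finiteness through one application of the kernel; for a metric one could combine the triangle inequality with $\mathcal{W}(P_\gamma(y,\cdot),\pi_\gamma)\le M E_\gamma(y)$ to get $P_\gamma E_\gamma(y)\le(2+M)E_\gamma(y)$, but that route is unavailable here. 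The same objection applies to your bound $|f(y')|\le|f(y)|+Ld(y,y')$: it controls $P_\gamma^\ell|f|(y)$ only if $\int_\cY d(y,y')\,P_\gamma^\ell(y,\d y')<\infty$, which is likewise not given; integrability of the iterates should instead be obtained by coupling $P_\gamma^\ell(y,\cdot)$ with $\pi_\gamma$ and using $\pi_\gamma(|f|)<\infty$ together with $\mathcal{W}(P_\gamma^\ell(y,\cdot),\pi_\gamma)\le\tau(P_\gamma^\ell)E_\gamma(y)$. The paper proves ii) by a different device: after reducing to $\pi_\gamma(f)=0$ it splits $f=f^+-f^-$ and interchanges $P_\gamma$ with the increasing partial sums $\sum_{\ell\le n}P_\gamma^\ell f^{\pm}$ by monotone convergence, so no integrable majorant is invoked (its concluding linearity step still implicitly uses that $P_\gamma u_\gamma(y)$ is well defined, which is essentially the integrability you are missing). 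In the settings where the proposition is actually applied, namely bounded eccentricity (Assumption~\ref{ass:bounded_eccentricity}) or $d(y_1,y_2)\le K(V^{1/p}(y_1)+V^{1/p}(y_2))$ with a simultaneous Lyapunov function (Assumptions~\ref{ass:Lyapunov_existence} and~\ref{ass:Lyapunov_bound}), the finiteness you need is immediate and your dominated-convergence argument then goes through; but as a proof of Proposition~\ref{prop:poisson_equation_wasserstein_setting} under its own hypotheses it is incomplete at exactly the point you identified.
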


\begin{proof}
	Let $\ell = tk_0 +r \in \N$ with $t \in \N_0$ and $0 \leq r < k_0$ as well as $y \in \cY$. 
	By Lemma \ref{lem:duality_inequality} and the properties of contraction coefficients   
	\begin{align*}
		\left\vert P_\gamma^\ell f(y) - \pi_\gamma(f)  \right\vert &\leq L \sup_{\psi \in \text{Lip}_1}\left\vert P^\ell \psi(y) - \pi_\gamma(\psi) \right\vert 
		\le L  \mathcal{W}(P_\gamma^{\ell}(y, \cdot), \pi_\gamma ) \\
		&\leq L \tau(P_\gamma^r) \tau(P_\gamma^{tk_0}) \mathcal{W}(\delta_y, \pi_\gamma)
		\leq L M^r \tau^{t} \mathcal{W}(\delta_y, \pi_\gamma) \\
		&\leq L M^{k_0} \tau^{t} E_\gamma(y) < \infty.
	\end{align*}
	Using this bound we obtain 
	\begin{align*}
		\vert u_\gamma(y )\vert  &\le \sum_{\ell=0}^\infty \vert P_\gamma^\ell f(y) - \pi_\gamma(f) \vert =
		\sum_{t=0}^\infty \sum_{j= tk_0 }^{(t+1) k_0-1 } \vert P_\gamma^{j} f(y) - \pi_\gamma(f) \vert\\
		&\leq \sum_{t=0}^\infty \sum_{j= tk_0 }^{(t+1)k_0 -1} L M^{k_0} \tau^{t} E_\gamma(y) 
		= \frac{k_0L M^{k_0} E_\gamma(y)}{1- \tau}.
	\end{align*}
	This shows that $u_\gamma$ is well-defined and also that i) holds.

	It remains to show ii): 
	It is no restriction to assume that $\pi_\gamma (f) =0$ (otherwise consider $g = f- \pi_\gamma(f)$). 
	For any $n \in \N$ we set 
	\[
	\psi_n^+ = \sum_{\ell=0}^n P^\ell_\gamma f^+  \qquad\text{and}\qquad \psi_n^- = \sum_{\ell=0}^n P_\gamma^\ell f^-,
	\]
	where $f^+$ and $f^-$ are the positive and negative part of $f$. 
	For any $n \in \N$  the definition of $\psi_n^+$ implies
	\[
	P_\gamma \psi_n^+ = \psi_{n+1}^+ - f^+ \qquad\text{and}\qquad P_\gamma \psi_n^- = \psi_{n+1}^- - f^-.
	\]
	By monotone convergence for any $y_0 \in \cY$, 
	\[
	\lim_{n \to \infty} \left( P_\gamma \psi_n^+ (y_0) \right) = P_\gamma \left( \lim_{n \to \infty} \psi_n^+\right)  (y_0).
	\]
	By the same arguments, the same statement for $\psi_n^-$ (instead of $\psi_n^+$) is deduced.
	Clearly, $u_\gamma = \lim_{n \to \infty} (\psi_n^+ - \psi_n^-)$ and hence by linearity 
	\begin{align*}
		P_\gamma u_\gamma = P_\gamma \left( \lim_{n \to \infty} (\psi_n^+ - \psi_n^-) \right) = \lim_{n \to \infty} P_\gamma \psi_n^+  - \lim_{n \to \infty} P_\gamma \psi_n^- \\
		= \lim_{n \to \infty} \left(\psi_{n+1}^+ - \psi_{n+1}^-\right) - f
		= u_\gamma -f.
	\end{align*}
	This shows ii) and therefore completes the proof.
\end{proof}

\subsection{Limit theorems}
This section contains our main results based on Assumption \ref{assumption_main} together with either, a bounded eccentricity condition, or, the existence of a Lyapunov function.

\subsubsection{Uniformly bounded eccentricity}\label{subsec:bounded_eccentricity}
The first two of our main results are for the setting where the eccentricities $E_\gamma$ are uniformly bounded.

\begin{ass}\label{ass:bounded_eccentricity}
	There exists $K \in (0, \infty) $ such that for any $\gamma \in \mathcal{I}$ and $y \in \cY$ we have 
	\[
	E_\gamma (y) \leq K.
	\]
\end{ass}

Our first main result is for the case $\beta\geq 1$ in the AIR scheme.

\begin{thm}\label{thm:limit_result_bounded_eccentricity_large_beta}
	Let $f \colon \cX \to \R$ be a function such that $h = h_f = f \cdot \mathds{1}_\Phi$ is Lipschitz with constant $L$ (w.r.t.~$d$) and bounded. 
	Let Assumptions~\ref{assumption_main} and \ref{ass:bounded_eccentricity} be true. 
	Then, for any $\beta\geq 1$ we have 
	\[
	\lim_{n \to \infty} \frac{1}{\sqrt{n} (\log n)^{1/2+\eps}} \left( \sum_{j=1}^n f(X_j) - \nu(f)\right) =0, 
	\]
	almost surely for arbitrary $\eps>0$. 
\end{thm}

Before we turn to the proof we state our result for $\beta \in (0,1)$, where the rate gets worse as $\beta $ approaches $0$.
This does not mean that small values of $\beta$ or purely adaptive algorithms are worse, rather this phenomena is due to our proof technique, which collapses as $\beta \to 0$. 
{However, we show in Section~\ref{subsec:small_beta_regime} that under additional assumptions this issue can be resolved.}

\begin{thm}\label{thm:limit_result_bounded_eccentricity_small_beta}
	Let $f \colon \cX \to \R$ be a function such that $h = h_f = f \cdot \mathds{1}_\Phi$ is Lipschitz with constant $L$ (w.r.t.~$d$) and bounded. 
	Let Assumptions~\ref{assumption_main} and \ref{ass:bounded_eccentricity} be true. 
	Then, for any $\beta \in (0,1)$ and $\eps > \frac{1}{1+\beta} - \frac{1}{2}$ we have 
	\[
	\lim_{n \to \infty} \frac{1}{n^{1/2+\eps}} \left( \sum_{j=1}^n f(X_j) - \nu(f)\right) =0, 
	\]
	almost surely. 
\end{thm}

To prove Theorems \ref{thm:limit_result_bounded_eccentricity_large_beta} and \ref{thm:limit_result_bounded_eccentricity_small_beta} we start with two auxiliary results.
\begin{lem}\label{lem:martingale_bound_for_bounded_eccentricity}
	{Let the assumptions of Theorem \ref{thm:limit_result_bounded_eccentricity_large_beta} or Theorem \ref{thm:limit_result_bounded_eccentricity_small_beta} be true. Then, $(M_n)_{n \in \N}$ is a $(\mathcal{F}_n)_{n \in \N}$ martingale and its differences satisfy}
	\[
	\vert \Delta_j \vert \le 2 \frac{k_0L M^{k_0} K}{1- \tau},
	\]
	almost surely, with $k_0, L, M , \tau$ and $K$ as in Assumptions \ref{assumption_main} and \ref{ass:bounded_eccentricity}. 
\end{lem}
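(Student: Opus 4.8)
The plan is to verify the two claims in Lemma~\ref{lem:martingale_bound_for_bounded_eccentricity} in turn, relying on Proposition~\ref{prop:poisson_equation_wasserstein_setting} applied to the function $h=h_f$. First I would check the hypotheses of that proposition: under the assumptions of Theorem~\ref{thm:limit_result_bounded_ergodicity_large_beta} (or Theorem~\ref{thm:limit_result_bounded_ergodicity_small_beta}) the function $h$ is Lipschitz with constant $L$ with respect to $d$, and bounded; since each $\pi_\gamma$ has marginal $\nu$, boundedness of $h$ together with $\nu(|f|)<\infty$ gives $\pi_\gamma$-integrability, and Assumption~\ref{ass:bounded_eccentricity} guarantees $E_\gamma(y)\le K<\infty$ for all $y,\gamma$, so in particular $E_\gamma(y)<\infty$. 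Hence for every $\gamma\in\mathcal I$ the Poisson solution $u_\gamma$ exists and satisfies $|u_\gamma(y)|\le \frac{k_0 L M^{k_0} E_\gamma(y)}{1-\tau}\le \frac{k_0 L M^{k_0} K}{1-\tau}$ uniformly in $y$ and $\gamma$; call this bound $B$.

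Next I would establish the martingale property. Recall $M_n=\sum_{j=1}^n\Delta_j$ with $\Delta_j=u_{\Gamma_{j-1}}(Y_j)-P_{\Gamma_{j-1}}u_{\Gamma_{j-1}}(Y_{j-1})$. The random variable $u_{\Gamma_{j-1}}(Y_j)$ is $\mathcal F_j$-measurable and, being bounded by $B$, is integrable; similarly $P_{\Gamma_{j-1}}u_{\Gamma_{j-1}}(Y_{j-1})$ is $\mathcal F_{j-1}$-measurable and bounded by $B$. Applying \eqref{equ:conditional_expectation} with $g=u_{\Gamma_{j-1}}$ (which is legitimate since $P_\gamma|u_\gamma|(y)\le B<\infty$), we get $\E[u_{\Gamma_{j-1}}(Y_j)\mid\mathcal F_{j-1}]=P_{\Gamma_{j-1}}u_{\Gamma_{j-1}}(Y_{j-1})$, hence $\E[\Delta_j\mid\mathcal F_{j-1}]=0$ almost surely, which gives $\E[M_n\mid\mathcal F_{n-1}]=M_{n-1}$; together with integrability (each $|\Delta_j|\le 2B$) this shows $(M_n)_{n\in\N}$ is an $(\mathcal F_n)_{n\in\N}$-martingale.

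Finally I would bound the increments: by the triangle inequality $|\Delta_j|\le |u_{\Gamma_{j-1}}(Y_j)|+|P_{\Gamma_{j-1}}u_{\Gamma_{j-1}}(Y_{j-1})|$, and since $|u_\gamma|\le B$ pointwise we also have $|P_\gamma u_\gamma|\le B$, so $|\Delta_j|\le 2B=2\frac{k_0 L M^{k_0} K}{1-\tau}$ almost surely, as claimed. I do not anticipate a serious obstacle here; the only point needing mild care is confirming measurability so that the conditional expectation identity \eqref{equ:conditional_expectation} and the measurability of $\omega\mapsto u_{\Gamma_{j}(\omega)}(y)$ apply — but this is exactly what the standing regularity assumptions on the spaces and the remark preceding the subsection (together with Appendix~\ref{sec:measurability_of_E_Gamma}) are there to provide, so it reduces to citing those.
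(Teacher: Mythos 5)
Your proof is correct and follows essentially the same route as the paper: apply Proposition \ref{prop:poisson_equation_wasserstein_setting} i) together with Assumption \ref{ass:bounded_eccentricity} to get the uniform bound $\sup_{y,\gamma}|u_\gamma(y)|\le \frac{k_0LM^{k_0}K}{1-\tau}$, deduce the same bound for $P_\gamma u_\gamma$, conclude $|\Delta_j|\le 2\frac{k_0LM^{k_0}K}{1-\tau}$, and combine integrability with the conditional expectation identity \eqref{equ:conditional_expectation} for the martingale property. Your explicit verification of the proposition's hypotheses and of the measurability points is slightly more detailed than the paper's proof, but the argument is the same.
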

\begin{proof}
	We observe that 
	\[
	\sup_{y \in \cY \,; \, \gamma \in \mathcal{I}} \vert u_\gamma (y) \vert \leq \frac{k_0L M^{k_0} K}{1- \tau},
	\]
	where we used Proposition \ref{prop:poisson_equation_wasserstein_setting} as well as Assumption \ref{ass:bounded_eccentricity}. 
	With the same arguments we deduce that 
	\[
	\sup_{y \in \cY \,; \, \gamma \in \mathcal{I}} \vert P_\gamma u_\gamma (y) \vert \leq \frac{k_0L M^{k_0} K}{1- \tau},
	\]
	and thus $\vert \Delta_j \vert \leq 2 \frac{k_0L M^{k_0} K}{1- \tau}$ almost surely.
	
	The bound for the differences implies $\E [\vert M_n\vert] = \E\left[\left\vert \sum_{j=1}^n \Delta_j \right\vert  \right] <\infty$ for any $n \in \N$.
	Hence, from Section \ref{sec:preliminaries} we know that $\E [M_n \vert \mathcal{F}_{n-1}] = M_{n-1}$ almost surely, showing that $M_n$ is a martingale. 
\end{proof}

Next we bound the remainder term $R_m$. 
Recall that the times of adaptation are $T_\ell = \sum_{j=1}^\ell k_j $, where $k_j$ are the ``windows'' between adaptation times $T_0 := 0$. 
Given $n \in \N$ we assume that $m = m(n)$ is such that $T_m \leq n+1< T_{m+1}$.
\begin{lem}\label{lem:R_m_bound_for_bounded_eccentricity}
	Let $h \colon \cX \to \R$ be a function such that $f = f_h = h \cdot \mathds{1}_\Phi$ is Lipschitz with constant $L$ (w.r.t.~$d$). 
	Let Assumptions~\ref{assumption_main} and \ref{ass:bounded_eccentricity} be true. 
	Then, for any $\beta>0$, we have
	\[
	\vert R_m \vert \leq  n^{1/(1+\beta)} \cdot (2c_\beta)^{1/(1+\beta)}\frac{k_0L M^{k_0} K}{1- \tau},
	\]
	almost surely. 
\end{lem}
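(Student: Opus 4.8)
The plan is to bound $R_m = \sum_{j=0}^{m-1} \left( u_{\Gamma_{T_{j+1}}}(Y_{T_{j+1}}) - u_{\Gamma_{T_j}}(Y_{T_{j+1}}) \right)$ crudely by the triangle inequality, estimating each of the $m$ summands by twice the uniform bound on $u_\gamma$ coming from Proposition~\ref{prop:poisson_equation_wasserstein_setting} together with Assumption~\ref{ass:bounded_eccentricity}. This gives
\[
\vert R_m \vert \leq \sum_{j=0}^{m-1} \left( \vert u_{\Gamma_{T_{j+1}}}(Y_{T_{j+1}}) \vert + \vert u_{\Gamma_{T_j}}(Y_{T_{j+1}}) \vert \right) \leq 2m \cdot \frac{k_0 L M^{k_0} K}{1- \tau},
\]
so the whole task reduces to converting the factor $m$ into the stated power of $n$.

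The second step is to recall that $m = m(n)$ is defined by $T_m \leq n+1 < T_{m+1}$, so in particular $T_m \leq n+1$. Combined with the lower bound $c_\beta m^{1+\beta} \leq T_m$ from \eqref{equ:AIR_MCMC_steps_bound} this yields $c_\beta m^{1+\beta} \leq n+1$, hence $m \leq \left( (n+1)/c_\beta \right)^{1/(1+\beta)}$. A minor cosmetic point: the claimed bound is stated in terms of $n^{1/(1+\beta)}$ rather than $(n+1)^{1/(1+\beta)}$ and carries an extra factor $2^{1/(1+\beta)}$; one absorbs the ``$+1$'' using $n+1 \leq 2n$ for $n \geq 1$, which is exactly what produces the $(2c_\beta)^{1/(1+\beta)}$ prefactor. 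Substituting $m \leq (2n/c_\beta)^{1/(1+\beta)} = n^{1/(1+\beta)} (2/c_\beta)^{1/(1+\beta)}$ into the bound above gives
\[
\vert R_m \vert \leq 2 \cdot n^{1/(1+\beta)} \left( \tfrac{2}{c_\beta} \right)^{1/(1+\beta)} \cdot \frac{k_0 L M^{k_0} K}{1-\tau}.
\]
There is a slight discrepancy with the factor $2$ in front (the paper writes $(2c_\beta)^{1/(1+\beta)}$ in the numerator, presumably meaning $2^{1/(1+\beta)}/c_\beta^{1/(1+\beta)}$ absorbing the leading $2$ into a generic constant, or reusing $c_\beta$ to denote the reciprocal); I would simply rename constants so that the final display matches the statement verbatim, noting that the argument is insensitive to the precise value.

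I do not expect a genuine obstacle here — the estimate is deliberately wasteful, since any per-window bound of order $1$ suffices after multiplying by $m \asymp n^{1/(1+\beta)}$. The only thing to be careful about is that the uniform bound $\sup_{y, \gamma} \vert u_\gamma(y) \vert \leq k_0 L M^{k_0} K / (1-\tau)$ genuinely applies to each term: this requires that the hypotheses of Proposition~\ref{prop:poisson_equation_wasserstein_setting} hold for every $\gamma = \Gamma_{T_j}(\omega)$, i.e. that $f = f_h = h \cdot \mathds{1}_\Phi$ is $\pi_\gamma$-integrable and Lipschitz with constant $L$ for all $\gamma \in \mathcal{I}$, and that $E_\gamma(y) < \infty$; the Lipschitz property and finiteness of the eccentricity are supplied by the hypothesis on $h$ and by Assumption~\ref{ass:bounded_eccentricity} (which gives $E_\gamma \leq K < \infty$), while $\pi_\gamma$-integrability of $h$ follows since $\pi_\gamma$ has marginal $\nu$ and $\nu(\vert f \vert) < \infty$ — or, under the boundedness of $h$ used in Theorems~\ref{thm:limit_result_bounded_ergodicity_large_beta} and \ref{thm:limit_result_bounded_ergodicity_small_beta}, trivially. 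With this verified, the bound on $\vert u_\gamma \vert$ holds $\mathbb{P}$-almost surely simultaneously for all the (countably many) indices appearing in $R_m$, and the proof concludes by the chain of inequalities above.
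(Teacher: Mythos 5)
Your proof follows essentially the same route as the paper's: bound each summand of $R_m$ by the uniform bound on $u_\gamma$ supplied by Proposition~\ref{prop:poisson_equation_wasserstein_setting} and Assumption~\ref{ass:bounded_eccentricity}, then convert $m$ into $n^{1/(1+\beta)}$ using $T_m \leq n+1$ and \eqref{equ:AIR_MCMC_steps_bound}. The constant bookkeeping you flag (the leading factor $2$ from the two terms per summand, and $(2/c_\beta)^{1/(1+\beta)}$ versus $(2c_\beta)^{1/(1+\beta)}$) is handled equally loosely in the paper's own proof and is immaterial to how the lemma is applied, so your treatment is fine.
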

\begin{proof}
	By Proposition \ref{prop:poisson_equation_wasserstein_setting} and  Assumption \ref{ass:bounded_eccentricity} we have
	\[
	\sup_{y \in \cY \,; \, \gamma \in \mathcal{I}} \vert u_\gamma (y) \vert \leq \frac{k_0L M^{k_0} K}{1- \tau},
	\]
	such that $\vert R_m \vert \le m \cdot  \frac{L M^{k_0} K}{1- \tau^{k_0}}$ almost surely. 
	According to \eqref{equ:AIR_MCMC_steps_bound} and since $T_m \le n+1$ it follows 
	\[
	m \le c_\beta^{1/(1+\beta)} T_m^{1/(1+\beta)} \le  c_\beta^{1/(1+\beta)} (n+1)^{1/(1+\beta)} \leq (2c_\beta)^{1/(1+\beta)} n^{1/(1+\beta)} ,
	\]
	which finishes the proof. 
\end{proof}

Now we are ready to show our main results of this section. 
\begin{proof}[Proof of Theorem \ref{thm:limit_result_bounded_eccentricity_large_beta}]
	Set $r(n ) = \sqrt{n}(\log n)^{1/2+\eps}$.
	By Lemma~\ref{lem:martingale_bound_for_bounded_eccentricity}, $(M_n)_{n \in \N}$ is a martingale with uniformly bounded increments. 
	Hence an application of Lemma~\ref{lem:martingale_almost_sure_convergence} shows 
	\[
	\lim_{n \to \infty}\frac{1}{r(n)} M_n =0
	\]
	almost surely.
	
	\medskip
	Since $\beta\geq 1$ and by Lemma \ref{lem:R_m_bound_for_bounded_eccentricity} we moreover deduce that 
	\[
	\frac{1}{r(n) } \vert R_m \vert \leq \frac{n^{1/(1+\beta)}}{r(n)} \widetilde{C} \leq \frac{n^{1/2}}{r(n)} \widetilde{C}, 
	\]
	with $\widetilde{C} = (2c_\beta)^{1/(1+\beta)}\frac{k_0L M^{k_0} K}{1- \tau}$. 
	Thus $\lim_{n \to \infty} \frac{1}{r(n) }\vert R_m \vert = 0$ a.s.
	
	Finally, by setting 
	\[
	g_n = h(Y_n) - h(Y_0) 
	+{u}_{\Gamma_0}(Y_0) - {u}_{\Gamma_{n}}(Y_n)
	\] 
	we have a sequence of uniformly (in $n$) bounded random variables. 
	Thus $\frac{1}{r(n)} g_n \to 0$ almost surely. 
	
	Now the result is a consequence of equations \eqref{equ:S_n_on_augmented_space} and  \eqref{equ:martingale_decomposition_air_MCMC_general}.
\end{proof}

\begin{proof}[Proof of Theorem \ref{thm:limit_result_bounded_eccentricity_small_beta}]
	We set $r(n) = n^{1/2+\eps}$.
	Note that our choice of $\eps$ implies that 
	\[
	\lim_{n \to \infty}\frac{n^{1/(1+\beta)}}{r(n)} =0.
	\]
	The proof is finished similar as in the proof of Theorem \ref{thm:limit_result_bounded_eccentricity_large_beta}. 
\end{proof}

\subsubsection{Lyapunov condition}
In this section we derive a limit result based on the existence of a Lyapunov function, which satisfies an inequality w.r.t. $d$.

Recall that a function $V \colon \cY \to [1, \infty)$ is called a Lyapunov function (for some transition kernel $P$ on $(\cY, \mathcal{F}_\cY)$) if there exist constants $\kappa \in (0,1)$ and $b \geq0$ such that 
\[
PV(y) \leq \kappa V(y)+b.
\]

Note that if $\pi$ is the invariant distribution of $P$, then $\pi(V) \leq \frac{b}{1-\tau}$, see e.g. \cite[Proposition 4.24]{hairer2006ergodic}.
\begin{ass}\label{ass:Lyapunov_existence}
	There exists a function $V \colon \cY \to [1, \infty)$ and constants $\kappa \in (0,1)$ and $b \in (0,\infty)$ such that for any $\gamma \in \mathcal{I}$ and $y \in \cY$,  
	\[
	P_\gamma V(y) \leq \kappa V(y) +b.
	\]
\end{ass}

If $V$ is a Lyapunov function then $V^{q}$ is also a Lyapunov function for any value of $q\in (0,1)$. 
This is well known from the literature (see e.g. \cite[Lemma 15.2.9]{meyn2012markov} for the case $q=1/2$).
For the sake of completeness we included a proof in the appendix, cf. Lemma \ref{lem:Lyapunov_function_exponent}. 

The second assumption relates the function $V^{1/p}$ to the distance-like function $d$, where $V$ is from Assumption \ref{ass:Lyapunov_existence} and $p>2$. 

\begin{ass}\label{ass:Lyapunov_bound}
	There are constants $p>2$ and $K \in (0, \infty)$  such that for the function $V$ from Assumption~\ref{ass:Lyapunov_existence} and any $y_1, y_2 \in \cY$ we have 
	\[
	d(y_1, y_2) \le K(V^{1/p}(y_1)+ V^{1/p}(y_2)).
	\]
\end{ass}

\begin{thm}\label{thm:limit_result_lyapunov}
	Let $f \colon \cX \to \R$ be a function such that $h = h_f = f \cdot \mathds{1}_\Phi$ is Lipschitz with constant $L$ (w.r.t. $d$) and 
	let Assumptions~\ref{assumption_main}, \ref{ass:Lyapunov_existence} and \ref{ass:Lyapunov_bound} be satisfied. 
	If $\vert h \vert \le V^{1/p}$, then for any $\beta \in (0,\infty)$ and $\eps > \max \left\{0,\frac{1}{1+\beta} + \frac{1}{p} - \frac{1}{2} \right\} $ we have 
	\[
	\lim_{n \to \infty} \frac{1}{n^{1/2+\eps}} \left( \sum_{j=1}^n f(X_j) - \nu(f)\right) =0, 
	\]
	almost surely. 
\end{thm}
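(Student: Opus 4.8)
The plan is to follow the same blueprint as the proofs of Theorems~\ref{thm:limit_result_bounded_ergodicity_large_beta} and \ref{thm:limit_result_bounded_ergodicity_small_beta}, using the decomposition \eqref{equ:S_n_on_augmented_space}--\eqref{equ:martingale_decomposition_air_MCMC_general}, but now controlling everything through moments of the Lyapunov function $V$ rather than through uniform bounds. The key preliminary observation is that, by Lemma~\ref{lem:Lyapunov_function_exponent}, $W := V^{1/p}$ is again a Lyapunov function for every $P_\gamma$ with constants uniform in $\gamma$ (inherited from Assumption~\ref{ass:Lyapunov_existence}); combined with Assumption~\ref{ass:Lyapunov_bound} this gives $E_\gamma(y) = \int d(y,y')\,\pi_\gamma(\d y') \le K(W(y) + \pi_\gamma(W)) \le K(W(y) + \tfrac{b'}{1-\kappa'})$ for uniform constants, i.e. the eccentricity is controlled by $W(y)$. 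Then Proposition~\ref{prop:poisson_equation_wasserstein_setting} applies to each $h$ with $\vert h \vert \le V^{1/p}$ (which is $\pi_\gamma$-integrable since $\pi_\gamma(V^{1/p}) < \infty$, and Lipschitz with constant $L$), yielding a Poisson solution $u_\gamma$ with $\vert u_\gamma(y) \vert \le C\, W(y)$ and hence also $\vert P_\gamma u_\gamma(y)\vert \le P_\gamma(C W)(y) \le C(\kappa' W(y) + b') \le C' W(y)$, for constants independent of $\gamma$.

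With these bounds in hand I would treat the three pieces of \eqref{equ:martingale_decomposition_air_MCMC_general} in turn. For the martingale $M_n$: the increments satisfy $\vert \Delta_j \vert \le C'(W(Y_j) + W(Y_{j-1}))$, and I would raise to a power: since $W^q = V^{q/p}$ is still a Lyapunov function for $q < p$, a standard drift argument (as in the appendix-type lemmas, e.g. iterating $P_\gamma W^q \le \kappa_q W^q + b_q$ along the chain with $\E[W^q(Y_{j+1})\vert\mathcal F_j] = P_{\Gamma_j}W^q(Y_j)$) gives $\sup_j \E[\,\vert\Delta_j\vert^{q}\,] < \infty$ for any $q < p$; in particular for some $q>2$ since $p>2$. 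A martingale strong law for $L^q$-bounded increments with $q>2$ (a version of the result used as Lemma~\ref{lem:martingale_almost_sure_convergence}, e.g. via the Burkholder--Davis--Gundy inequality together with Kronecker's lemma, or the Chow/Hall--Heyde strong law) then yields $M_n / n^{1/2+\eps'} \to 0$ a.s.\ for any $\eps' > 0$, which is comfortably absorbed by the stated rate $n^{1/2+\eps}$. For the boundary term $g_n = h(Y_n) - h(Y_0) + u_{\Gamma_0}(Y_0) - u_{\Gamma_n}(Y_n)$: here $\vert g_n \vert \le C''(W(Y_n) + 1)$, and since $\sup_n \E[W(Y_n)^{q}] < \infty$ for $q$ slightly below $p$, a Borel--Cantelli argument gives $W(Y_n) = o(n^{1/q})$ a.s., so $g_n / n^{1/2+\eps} \to 0$ provided $\eps > 1/q - 1/2$, i.e. essentially $\eps > 1/p - 1/2$, consistent with the hypothesis.

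For the remainder $R_m = \sum_{j=0}^{m-1}(u_{\Gamma_{T_{j+1}}}(Y_{T_{j+1}}) - u_{\Gamma_{T_j}}(Y_{T_{j+1}}))$, I would bound $\vert R_m \vert \le 2C' \sum_{j=1}^{m} W(Y_{T_j})$ (a telescoping-free crude bound using the uniform $\vert u_\gamma \vert \le C' W$). Writing $m = m(n) \le C n^{1/(1+\beta)}$ via \eqref{equ:AIR_MCMC_steps_bound}, and using $\sup_k \E[W(Y_k)^q] < \infty$, the sum $\sum_{j=1}^m W(Y_{T_j})$ grows like $m$ times an almost-surely $o(m^{1/q})$-type fluctuation plus its mean, so $\vert R_m \vert = O(n^{1/(1+\beta)} \cdot n^{\delta})$ a.s.\ for every $\delta>0$; more carefully one shows $\vert R_m\vert / n^{1/(1+\beta)+1/p+\delta} \to 0$ a.s. Dividing by $n^{1/2+\eps}$ and using $\eps > \frac{1}{1+\beta} + \frac{1}{p} - \frac12$ gives the required convergence to $0$. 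Finally, assembling the three limits through \eqref{equ:S_n_on_augmented_space} and \eqref{equ:martingale_decomposition_air_MCMC_general} and noting $r(n) = n^{1/2+\eps} \ge \sqrt n$ finishes the proof.

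The main obstacle I anticipate is the $R_m$ term: unlike the bounded-eccentricity case where $\vert u_\gamma\vert$ is uniformly bounded and $\vert R_m\vert \le Cm$ is immediate, here each summand is only controlled by $W(Y_{T_j})$, so one genuinely needs a moment/maximal bound on $\max_{j \le m} W(Y_{T_j})$ uniform enough to beat $n^{1/2+\eps}$. The delicate point is that the chain is non-Markovian (adaptive), so the drift estimates must be applied conditionally via \eqref{equ:conditional_expectation} with constants that do not depend on the (random) $\Gamma_j$ — which is exactly what the uniformity in Assumptions~\ref{ass:Lyapunov_existence} and \ref{ass:Lyapunov_bound} is designed to provide — and one must be careful that the exponent loss in passing from $V$ to $V^{q/p}$ (needed to keep $q>2$ for the martingale part while also wanting $q$ close to $p$ for the $g_n$ and $R_m$ parts) is compatible with the stated range of $\eps$.
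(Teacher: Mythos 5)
Your proposal is correct and follows essentially the same route as the paper: the same AIR martingale decomposition, the Poisson-equation bound $\vert u_\gamma \vert \le C\,V^{1/p}$ obtained from Proposition~\ref{prop:poisson_equation_wasserstein_setting} together with Assumption~\ref{ass:Lyapunov_bound}, uniform moment bounds from the simultaneous drift condition, and Markov/Borel--Cantelli arguments for $R_m$ and the boundary terms, arriving at exactly the condition $\eps > \frac{1}{1+\beta}+\frac{1}{p}-\frac{1}{2}$. The only immaterial differences are that the paper treats $M_n$ with the $L^2$ strong law of Lemma~\ref{lem:martingale_almost_sure_convergence} (so no $q>2$ martingale machinery is needed) and controls $R_m$ directly via the Minkowski bound $\E\vert R_m\vert^p \le C_2\, m^p$ followed by Markov's inequality against $r(n)=n^{1/2+\eps}$, rather than through your intermediate almost-sure growth rate for $\sum_{j\le m} V^{1/p}(Y_{T_j})$.
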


We establish two auxiliary lemmas before proving the main result.

\begin{lem}\label{lem:martingale_bound_lyapunov_condition}
	Under the assumptions of Theorem \ref{thm:limit_result_lyapunov} $(M_n)_{n \in \N}$ is a $(\mathcal{F}_n)_{n \in \N}$ martingale and there exists some constant $C_1 \in (0,\infty)$ such that
	\[
	\E	\vert \Delta_j \vert^2 \le C_1,
	\]
	for any $j \in \N$.
\end{lem}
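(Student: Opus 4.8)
The plan is to show that $(M_n)$ is a martingale and that its increments have uniformly bounded second moments, using the Poisson solution bound from Proposition~\ref{prop:poisson_equation_wasserstein_setting} together with the Lyapunov structure. First I would recall that $\E[M_n \mid \mathcal{F}_{n-1}] = M_{n-1}$ holds automatically from \eqref{equ:conditional_expectation}, so the only real work is the $L^2$ bound on $\Delta_j = u_{\Gamma_{j-1}}(Y_j) - P_{\Gamma_{j-1}} u_{\Gamma_{j-1}}(Y_{j-1})$; the $L^2$ bound also implies $\E|M_n| < \infty$, completing the martingale claim. By Proposition~\ref{prop:poisson_equation_wasserstein_setting}(i) and the hypothesis $|h| \le V^{1/p}$ (so $h$ is $\pi_\gamma$-integrable since $\pi_\gamma(V) \le b/(1-\kappa)$ by the remark before Assumption~\ref{ass:Lyapunov_existence}), we have $|u_\gamma(y)| \le \frac{k_0 L M^{k_0}}{1-\tau} E_\gamma(y)$, and by Assumption~\ref{ass:Lyapunov_bound}, $E_\gamma(y) = \int d(y,y')\,\pi_\gamma(\d y') \le K(V^{1/p}(y) + \pi_\gamma(V^{1/p}))$. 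Using Jensen (since $p>1$) and $\pi_\gamma(V)\le b/(1-\kappa)$, the term $\pi_\gamma(V^{1/p})$ is bounded by a constant independent of $\gamma$; hence $|u_\gamma(y)| \le c_1 (V^{1/p}(y) + c_2) \le c_3 V^{1/p}(y)$ for constants depending only on $k_0, L, M, \tau, K, b, \kappa$ (using $V \ge 1$).

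Next I would control the two halves of $\Delta_j$. For the first half, $\E|u_{\Gamma_{j-1}}(Y_j)|^2 \le c_3^2\, \E[V^{2/p}(Y_j)]$, and $2/p < 1$ so $V^{2/p}$ is again a Lyapunov function for every $P_\gamma$ (uniformly, by Lemma~\ref{lem:Lyapunov_function_exponent} applied with exponent $2/p$, since the constants in that lemma depend only on $\kappa, b$). Iterating the drift inequality $P_\gamma W(y) \le \tilde\kappa W(y) + \tilde b$ with $W = V^{2/p}$ along the chain and taking expectations gives $\E[W(Y_j)] \le \tilde\kappa^j W(y_0) + \tilde b/(1-\tilde\kappa) \le W(y_0) + \tilde b/(1-\tilde\kappa) =: c_4$, uniformly in $j$. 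For the second half, by the conditional Jensen inequality $|P_{\Gamma_{j-1}} u_{\Gamma_{j-1}}(Y_{j-1})|^2 \le P_{\Gamma_{j-1}}(u_{\Gamma_{j-1}}^2)(Y_{j-1}) \le c_3^2\, P_{\Gamma_{j-1}} W(Y_{j-1}) \le c_3^2(\tilde\kappa W(Y_{j-1}) + \tilde b)$, whose expectation is again bounded by a constant uniformly in $j$ via the same drift-iteration bound on $\E[W(Y_{j-1})]$. Combining with $(a+b)^2 \le 2a^2 + 2b^2$ yields $\E|\Delta_j|^2 \le C_1$ with an explicit $C_1$.

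The main obstacle is the bookkeeping around the exponents: ensuring that $V^{2/p}$ (not just $V^{1/p}$) is a uniform Lyapunov function with constants independent of $\gamma$, and that the drift iteration gives a bound on $\E[V^{2/p}(Y_j)]$ that is genuinely uniform in $j$ despite the adaptively changing kernels $\Gamma_{j-1}$. This is handled because the Lyapunov constants in Assumption~\ref{ass:Lyapunov_existence} are uniform in $\gamma$, so Lemma~\ref{lem:Lyapunov_function_exponent} produces uniform constants $\tilde\kappa \in (0,1), \tilde b$ for $W = V^{2/p}$, and then the estimate $\E[W(Y_j)\mid \mathcal{F}_{j-1}] = P_{\Gamma_{j-1}} W(Y_{j-1}) \le \tilde\kappa W(Y_{j-1}) + \tilde b$ can be iterated unconditionally after taking expectations, with $Y_0 = y_0$ deterministic. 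One should also remark that $h$ being $\pi_\gamma$-integrable and Lipschitz makes Proposition~\ref{prop:poisson_equation_wasserstein_setting} applicable with $f = h$, which requires $E_\gamma(y) < \infty$ for all $y$ — but that follows from Assumption~\ref{ass:Lyapunov_bound} and finiteness of $\pi_\gamma(V^{1/p})$.
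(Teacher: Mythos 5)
Your argument is correct, and it follows the same basic strategy as the paper (Poisson bound from Proposition~\ref{prop:poisson_equation_wasserstein_setting}, Assumption~\ref{ass:Lyapunov_bound} to trade the eccentricity for $V^{1/p}$, and iteration of the uniform-in-$\gamma$ drift condition to get moment bounds uniform in $j$), but the execution differs in two respects. First, the paper bounds the $p$-th moment $\E\vert u_{\Gamma_j}(Y_k)\vert^p$ (via Jensen applied inside the eccentricity integral and the drift for $V$ itself) and only then passes to the second moment by Jensen; this stronger $L^p$ bound is not gratuitous, since it is reused verbatim in Lemma~\ref{lem:remainder_bound_lyapunov_condition} and for the Borel--Cantelli control of the boundary terms in the proof of Theorem~\ref{thm:limit_result_lyapunov}, whereas your pointwise bound $\vert u_\gamma\vert \le c_3 V^{1/p}$ combined with the drift for $W=V^{2/p}$ (Lemma~\ref{lem:Lyapunov_function_exponent} with exponent $2/p<1$) delivers exactly the $L^2$ bound needed here and nothing more. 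Second, for the term $P_{\Gamma_{j-1}}u_{\Gamma_{j-1}}(Y_{j-1})$ the paper invokes the Poisson identity $P_\gamma u_\gamma = u_\gamma - h + \pi_\gamma(h)$ together with $\vert h\vert\le V^{1/p}$, while you use conditional Jensen $\vert P_\gamma u_\gamma\vert^2 \le P_\gamma(u_\gamma^2)$ followed by one application of the drift; both are valid (your route does not even need the hypothesis $\vert h\vert\le V^{1/p}$ at this point, only the Lipschitz property, integrability, and the drift), and your observation that $\pi_\gamma(V^{1/p})\le (b/(1-\kappa))^{1/p}$ uniformly in $\gamma$ correctly supplies the finiteness of $E_\gamma$ required to apply Proposition~\ref{prop:poisson_equation_wasserstein_setting}.
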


\begin{proof}
	First, we establish the claimed bound on the differences. 
	
	For arbitrary $j,k \in \N$ Proposition~\ref{prop:poisson_equation_wasserstein_setting}, our assumptions about the Lyapunov functions $V$ and $V^{1/p}$ as well as the bound $\pi_\gamma (V) \leq \frac{b}{1-\kappa}$, valid for any $\gamma \in \mathcal{I}$ (compare \cite[Proposition 4.24]{hairer2006ergodic}), yield
	\begin{align*}
		\E [\vert u_{\Gamma_{j}} (Y_k)\vert^p ] 
		&\leq 
		\left( \frac{k_0L M^{k_0} }{1- \tau}\right)^p  \E [ E_{\Gamma_{j}} (Y_k)^p] \\
		&\leq 
		\left( \frac{k_0L M^{k_0} }{1- \tau}\right)^p  \E \left[ \int_\cX d(Y_k, y)^p \pi_{\Gamma_j} (\d y)   \right] \\
		&\leq 
		\left( \frac{Kk_0L M^{k_0} }{1- \tau}\right)^p  \E \left[ \int_\cX \left(V^{1/p}(Y_k) + V^{1/p}(y) \right)^{p} \pi_{\Gamma_j} (\d y)   \right] \\
		&\leq 
		\left( \frac{Kk_0L M^{k_0} }{1- \tau}\right)^p  \E \left[ \int_\cX2^p \left(  V(Y_k) + V(y)\right)  \pi_{\Gamma_j} (\d y)   \right] \\
		&\leq 
		\left( \frac{2Kk_0L M^{k_0} }{1- \tau}\right)^p  \left( \E \left[ V(Y_k)   \right] + \frac{b}{1-\kappa} \right) .
	\end{align*}
	By the Lyapunov property $\E[V(Y_k)] = \E [P_{\Gamma_{k-1}}V(Y_{k-1})] \leq \kappa \E[V(Y_{k-1})] +b$.
		Hence, by induction on $k$, we obtain $\E[V(Y_k)] \leq \kappa^k \E[V(Y_{0})] +b\sum_{\ell=0}^{k-1}\kappa^\ell$ and therefore
		\begin{align*}
			\E \left[ V(Y_k)   \right] &\leq \kappa^k \E[V(Y_{0})] +b\sum_{\ell=0}^{k-1}\kappa^\ell \leq \kappa^k \E[V(Y_{0})] +b\sum_{\ell=0}^{\infty}\kappa^\ell \\ 
			&= 
			\kappa^k \E[V(Y_0)] + \frac{b}{1-\kappa} \leq \E[ V(Y_0)] + \frac{b}{1-\kappa}
			= V(y_0) + \frac{b}{1-\kappa}.
	\end{align*}
	Applying Jensen's inequality yields 
	\begin{align*}
		\E [\vert u_{\Gamma_{j}} (Y_k)\vert^2 ] 
		\leq
		\E [\vert u_{\Gamma_{j}} (Y_k)\vert^p ]^{2/p} 
		\leq   
		\left( \frac{2Kk_0L M^{k_0} }{1- \tau}\right)^2  \left( V(y_0) + \frac{2b}{1-\kappa} \right)^{2/p} < \infty.
	\end{align*}
	Employing Proposition \ref{prop:poisson_equation_wasserstein_setting} ii) we deduce 
	\[
	\vert P_{\Gamma_{j}} u_{\Gamma_{j}}(Y_k) \vert 
	\le   
	\vert u_{\Gamma_{j}}(Y_k) \vert + \vert V^{1/p}(Y_k) \vert + \pi_{\Gamma_j}(\vert V^{1/p} \vert ).
	\]
	Using the same arguments as above, one obtains a bound for $\vert P_{\Gamma_{j}} u_{\Gamma_{j}}(Y_k) \vert^p$ from which the desired estimate for $\E\vert \Delta_j \vert^2$ follows. 
	
	Using this bound on the differences it follows that $\E [\vert M_n \vert] < \infty$ for any $n \in \N$. Therefore, from Section \ref{sec:preliminaries} we know that $\E [M_n \vert \mathcal{F}_{n-1}] = M_{n-1}$ almost surely, such that $(M_n)_{n \in \N}$ is a martingale as claimed. 
\end{proof}

Next we bound the remainder term $R_m$ in a suitable way.

\begin{lem}\label{lem:remainder_bound_lyapunov_condition}
	Let the assumptions of Theorem \ref{thm:limit_result_lyapunov} be satisfied. Then, there exists some $C_2 \in (0, \infty)$ such that for any $n \in \N$ and $m=m(n)$ we have
	\[
	\E	\vert R_m \vert^p \le m^p \cdot  C_2.
	\]
\end{lem}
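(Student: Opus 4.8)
The plan is to bound the $p$-th moment of $R_m = \sum_{j=0}^{m-1} \left( u_{\Gamma_{T_{j+1}}}(Y_{T_{j+1}}) - u_{\Gamma_{T_j}}(Y_{T_{j+1}}) \right)$ by first passing from a sum to a maximum of summands at the cost of a factor $m^{p-1}$, and then controlling each individual summand via the moment estimates already obtained in the proof of Lemma \ref{lem:martingale_bound_lyapunov_condition}. First I would apply the triangle inequality and the power-mean (or Jensen) inequality in the form $\left\vert \sum_{j=0}^{m-1} a_j \right\vert^p \le m^{p-1} \sum_{j=0}^{m-1} \vert a_j \vert^p$, so that
\[
\E \vert R_m \vert^p \le m^{p-1} \sum_{j=0}^{m-1} \E \left\vert u_{\Gamma_{T_{j+1}}}(Y_{T_{j+1}}) - u_{\Gamma_{T_j}}(Y_{T_{j+1}}) \right\vert^p.
\]
A further application of the elementary inequality $\vert x - y\vert^p \le 2^{p-1}(\vert x\vert^p + \vert y\vert^p)$ reduces everything to bounding $\E[\vert u_{\Gamma_{T_{j+1}}}(Y_{T_{j+1}})\vert^p]$ and $\E[\vert u_{\Gamma_{T_j}}(Y_{T_{j+1}})\vert^p]$.

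The key point is that both of these are instances of $\E[\vert u_{\Gamma_i}(Y_k)\vert^p]$ for suitable indices $i,k$, and exactly this quantity was bounded in the proof of Lemma \ref{lem:martingale_bound_lyapunov_condition}: we showed
\[
\E[\vert u_{\Gamma_i}(Y_k)\vert^p] \le \left( \frac{2Kk_0L M^{k_0}}{1-\tau}\right)^p \left( V(y_0) + \frac{2b}{1-\kappa}\right),
\]
using Proposition \ref{prop:poisson_equation_wasserstein_setting}, Assumption \ref{ass:Lyapunov_bound}, and the uniform-in-$k$ bound $\E[V(Y_k)] \le V(y_0) + \frac{b}{1-\kappa}$ coming from iterating the Lyapunov drift condition in Assumption \ref{ass:Lyapunov_existence}. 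Crucially this bound is independent of the indices $i$ and $k$, so each of the $m$ summands contributes at most a constant $C_3$, giving $\E\vert R_m\vert^p \le m^{p-1} \cdot m \cdot C_3 = m^p \cdot C_2$ with $C_2 = 2^{p-1} C_3$, which is the claim.

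I do not expect a serious obstacle here, since the heavy lifting (the moment bound on $u_{\Gamma_i}(Y_k)$) is already done. The only mild subtlety is the measurability and well-definedness of the random quantities $u_{\Gamma_{T_j}}(Y_{T_{j+1}})$ and of $E_{\Gamma_j}(Y_k)$, but this is already covered by the remark after Assumption \ref{assumption_main} and the discussion in Appendix \ref{sec:measurability_of_E_Gamma}; one should also note that $\Gamma_{T_{j+1}}$ is $\mathcal F_{T_{j+1}}$-measurable and $Y_{T_{j+1}}$ is as well, so taking unconditional expectations is unproblematic. A final bookkeeping point is that the bound must be stated uniformly over all $j \le m-1$, which is immediate from the index-independence noted above.
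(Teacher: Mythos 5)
Your proposal is correct and matches the paper's argument: the paper likewise reduces the claim to the uniform bound $\E[\vert u_{\Gamma_j}(Y_k)\vert^p]\le \bigl(\tfrac{2Kk_0LM^{k_0}}{1-\tau}\bigr)^p\bigl(V(y_0)+\tfrac{2b}{1-\kappa}\bigr)$ established in the proof of Lemma \ref{lem:martingale_bound_lyapunov_condition} and then bounds each of the $m$ summands of $R_m$, exactly as you do via the power-mean and $\vert x-y\vert^p\le 2^{p-1}(\vert x\vert^p+\vert y\vert^p)$ inequalities (the paper leaves this elementary step implicit). Your constant bookkeeping differs only in the harmless factor of $2^p$ absorbed into $C_2$, so no changes are needed.
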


\begin{proof}
	From the proof of Lemma \ref{lem:martingale_bound_lyapunov_condition} we know 
	\[
	\E [\vert u_{\Gamma_{j}} (Y_k)\vert^p ] 
	\le 
	\left( \frac{2Kk_0L M^{k_0} }{1- \tau}\right)^p  \left( V(y_0) + \frac{2b}{1-\kappa}  \right),
	\]
	for any $j,k \in \N$.
	Using this estimate and Minkowski's inequality the bound for $R_m$ follows.
\end{proof}

Now we turn to the proof of Theorem \ref{thm:limit_result_lyapunov}.

\begin{proof}[Proof of Theorem \ref{thm:limit_result_lyapunov}]
	We show that all terms in the martingale decomposition \eqref{equ:martingale_decomposition_air_MCMC_general} go to $0$ after being normalised with $r(n) = n^{1/2+\eps}$.
	
	We begin with the martingale part $M_n$ which has differences uniformly bounded in $L^2$, see Lemma~\ref{lem:martingale_bound_lyapunov_condition}.
	Thus Lemma \ref{lem:martingale_almost_sure_convergence} yields 
	\[
	\lim_{n \to \infty}
	\frac{1}{r(n)} M_n =0
	\]
	almost surely.

Next we turn to $R_m$: For any $\delta>0$ we have by Markov's inequality, Lemma~\ref{lem:remainder_bound_lyapunov_condition} and \eqref{equ:AIR_MCMC_steps_bound} that
	\[
	\mathbb{P}\left[ \left\vert R_m \right\vert > \delta r(n) \right] \leq 
	\frac{m^p}{r(n)^p} C_2 \delta^{-p} 
	\leq
	\frac{n^{p/(1+\beta)}}{r(n)^p} C_2 (2c_\beta)^{1/(1+\beta)} \delta^{-p}.
	\]
	Our assumption on $\eps$ implies $p(\frac{1}{1+\beta}-\frac{1}{2} - \eps) < -1$, whence
	\[
	\sum_{n=1}^\infty\frac{n^{p/(1+\beta)}}{r(n)^p} = \sum_{n=1}^\infty\frac{n^{p/(1+\beta)}}{n^{p(1/2+\eps)}} <\infty,
	\]
	such that $\lim_{n \to \infty}\frac{1}{r(n)}R_m =0$ almost surely by the Borel-Cantelli lemma.
	
	\medskip
	The remaining part that needs to be controlled is:
	\[
	\frac{1}{r(n)}\left( h(Y_n) - h(Y_0) 
	+{u}_{\Gamma_0}(Y_0) - {u}_{\Gamma_{n}}(Y_n) \right) .
	\]
	Note that by assumption $Y_0 = y_0 \in \cY$ and $\Gamma_0=\gamma_0 \in \mathcal{I}$ are fixed, which immediately implies that
		\[
		\lim_{n \to \infty}	\frac{1}{r(n)}h(Y_0) =0 \qquad \text{and} \qquad \lim_{n \to \infty}\frac{1}{r(n)}u_{\Gamma_0}(Y_0) = 0.
		\]
		Moreover, we have shown in the proof of Lemma \ref{lem:martingale_bound_lyapunov_condition} that $\E \vert u_{\Gamma_{j}} (Y_k) \vert^p \leq C $, for any $j,k \in \N$; with some constant $C \in (0, \infty)$ independent of $k,j$.
		Hence, Markov's inequality and the assumptions on $\eps$ and $p$ yield
		\begin{equation}\label{equ:Borel_cantelli_remainder_part}
			\sum_{n=1}^\infty \mathbb{P}\left[ \vert u_{\Gamma_n}(Y_n) \vert \geq \delta r(n)  \right] \leq \sum_{n=1}^{\infty} \frac{\E \vert u_{\Gamma_{n}} (Y_n) \vert^p }{\delta^p r(n)^p} \leq \sum_{n=1}^{\infty} \frac{C^p}{\delta^p n^{p(1/2+\eps)}} < \infty.
		\end{equation}
		Thus $\lim_{n \to \infty} \frac{1}{r(n)} u_{\Gamma_n}(Y_n) =0$ almost surely, by the Borel-Cantelli lemma.
		
		Finally, since $\vert h \vert \leq V^{1/p}$ we have $\E [\vert h(Y_n) \vert^p ] \leq \E[V(Y_n)]$. Moreover, we have shown in the proof of Lemma~\ref{lem:martingale_bound_lyapunov_condition} that $\E[V(Y_n)] \leq V(y_0) + \frac{b}{1-\kappa} < \infty$.
		Thus, the same steps as in \eqref{equ:Borel_cantelli_remainder_part} show that $\sum_{n=1}^\infty \mathbb{P}\left[ \vert h(Y_n) \vert \geq \delta r(n)  \right] < \infty$, such that also $\lim_{n \to \infty}\frac{1}{r(n)}h(Y_n) =0$ almost surely. 
\end{proof}

	\subsection{The ``small $\beta$ regime''}\label{subsec:small_beta_regime}
We refer to the ``small $\beta$ regime'' in the case of $\beta\in(0,1)$, i.e. exactly when the rates in the Theorems \ref{thm:limit_result_bounded_eccentricity_large_beta}, \ref{thm:limit_result_bounded_eccentricity_small_beta} and \ref{thm:limit_result_lyapunov} deteriorate.
	The main reason for this are the bounds for $R_m$, see Lemmas \ref{lem:R_m_bound_for_bounded_eccentricity} and \ref{lem:remainder_bound_lyapunov_condition}, which later lead to estimates involving the ratio $m/n$. 
	Below we show that under additional assumptions, better rates for $\beta\in (0,1)$ are possible. 
	For the sake of presentation we focus on the setting of Section \ref{subsec:bounded_eccentricity}. 
	The following Lipschitz and waning adaptation\footnote{{Waning adaptation was recently introduced in \cite{laitinen2024invitationadaptivemarkovchain} and generalises diminishing adaptation, which is a commonly used assumption in the adaptive MCMC literature.}} conditions are the main ingredient to improve the bound for $R_m$. 
	\begin{ass}\label{ass:wanig_adaptation_and_Lipschitz}
		For the parameter space we have $\mathcal{I}\subseteq \R^s$, with $s \in \N$, and the function $u_\gamma$ of Proposition \ref{prop:poisson_equation_wasserstein_setting} satisfies a uniform Lipschitz condition,
		\begin{equation}\label{equ:Lipschitz_parameter_Poisson_solution}
			\sup_{y \in \cY}\vert u_\gamma(y) - u_{\gamma'}(y) \vert \leq \Lambda \norm{\gamma- \gamma'},
		\end{equation}
		with $\Lambda \in (0, \infty)$ and $\norm{\cdot}$ being the Euclidean norm on $\R^s$. 
		Moreover, the sequence of parameters is (strongly) $\rho$-waning for some $\rho \in (0,1)$, that is, 
		\begin{equation}\label{equ:waning_adaptation}
			\lim_{k\to \infty} \frac{1}{k^\rho} \sum_{j=1}^k \norm{\Gamma_{T_j}-\Gamma_{T_{j-1}}} =0, \qquad \text{almost surely}.
		\end{equation}
	\end{ass}
	
	\begin{rem}
		The Lipschitz condition \eqref{equ:Lipschitz_parameter_Poisson_solution} is somewhat restrictive, since it is uniformly in $y \in \cY$. 
		However, it fits into the setting of simultaneous uniform ergodicity (studied in Section \ref{sec:examples}), see e.g. Lemma 8 and Section 5 in \cite{laitinen2024invitationadaptivemarkovchain}; see also \cite[Proposition 3]{andrieu2006ergodicity} and \cite[Proposition 4]{saksman2010ergodicity}.
	\end{rem}
	
	Using Assumption \ref{ass:wanig_adaptation_and_Lipschitz} we are able to improve Theorem \ref{thm:limit_result_bounded_eccentricity_small_beta}.
	\begin{thm}\label{thm:small_beta_waning_adaptation}
		Suppose we are in the setting of Theorem \ref{thm:limit_result_bounded_eccentricity_small_beta}, 
		and additionally let Assumption \ref{ass:wanig_adaptation_and_Lipschitz} be satisfied.
		Then, for any $\beta \in (0,1)$ with $\beta \geq 2\rho -1$ and any $\eps>0$ we have 
		\[
		\lim_{n \to \infty} \frac{1}{\sqrt{n}\log(n)^{1/2+\eps}} \left( \sum_{j=1}^n f(X_j) - \nu(f)\right) =0, 
		\]
		almost surely. 
	\end{thm}
	
	\begin{proof}
		Note that all assumptions to apply Lemma \ref{lem:martingale_bound_for_bounded_eccentricity} are fulfilled. 
		Hence, as in the proof of Theorem \ref{thm:limit_result_bounded_eccentricity_large_beta} we may apply Lemma \ref{lem:martingale_almost_sure_convergence} to obtain 
		\[
		\lim_{n \to \infty}
		\frac{1}{\sqrt{n}(\log n)^{1/2+\eps}} M_n =0,
		\]
		almost surely. Moreover, by definition of $R_m$ and \eqref{equ:Lipschitz_parameter_Poisson_solution} we have 
		\[
		\left\vert \frac{1}{\sqrt{n}(\log n)^{1/2+\eps}} R_m \right\vert \leq  \frac{\Lambda m^\rho}{\sqrt{n}(\log n)^{1/2+\eps}} \frac{1}{m^\rho}\sum_{j=1}^m \norm{\Gamma_{T_j}-\Gamma_{T_{j-1}}}  .
		\]
		Bounding $m=m(n)$ as in the proof of Lemma \ref{lem:R_m_bound_for_bounded_eccentricity}, and using the assumption on $\beta$ implies $\frac{m^\rho}{\sqrt{n}} \leq \frac{c n^{\rho/(1+\beta)}}{\sqrt{n}} \leq c$ for any $n \in \N$, where $c= (2c_\beta)^{\rho/(1+\beta)}$. 
		Combining this with the waning adaptation condition \eqref{equ:waning_adaptation} it follows immediately that $\lim_{n \to \infty}\frac{1}{\sqrt{n}(\log n)^{1/2+\eps}} R_m =0$ almost surely. 
		
		Finally, the remaining part of the martingale decomposition $h(Y_n) - h(Y_0) 
		+{u}_{\Gamma_0}(Y_0) - {u}_{\Gamma_{n}}(Y_n)$ is treated as in the proof of Theorem \ref{thm:limit_result_bounded_eccentricity_large_beta}.
	\end{proof}
	\subsection{Discussion, open questions and outlook}
	We have proven limit theorems for AIR algorithms based on a contraction condition, see Assumption \ref{assumption_main}. 
	If $\eps >0$ can be chosen close to $0$ in Theorems~\ref{thm:limit_result_bounded_eccentricity_large_beta}, \ref{thm:limit_result_bounded_eccentricity_small_beta}, and \ref{thm:limit_result_lyapunov}, then the rates in these results almost match the one in the law of the iterated logarithm.
	The choice of $\eps$ depends on $\beta$, which determines how often we adapt and larger values of $\beta$ allow for smaller $\eps$ in our results. 
	Yet, under additional assumptions, cf. Assumption \ref{ass:wanig_adaptation_and_Lipschitz}, also the ``small $\beta$ regime'' can be separately handled, see Theorem \ref{thm:small_beta_waning_adaptation}. 
	It is fair to ask whether additional conditions are really required to obtain ``good rates'' for small $\beta$. 
	The treatment of this demanding question is left open.

	Moreover, convergence rates as in Theorems \ref{thm:limit_result_bounded_eccentricity_large_beta} or Theorem \ref{thm:limit_result_lyapunov} are also desirable for purely adaptive, i.e. $\beta=0$, algorithms.
	Such may be deduced under approriate assumptions by virtue of Proposition~6 of \cite{saksman2010ergodicity}.
	The authors of \cite{saksman2010ergodicity} study \textit{stochastic approximation} type adaptive MCMC methods, and to apply their Proposition 6, a constant $\eps>0$ (different from our $\eps$) appearing in their Assumptions (A1) -- (A4) needs to be sufficiently small\footnote{{For the adaptive random walk Metropolis algorithm this is possible (under certain assumptions), see \cite{saksman2010ergodicity}, in particular the proof of Theorem 10 and Remark 16 there.}}.
	Their assumptions are close to our geometric ergodicity example of Section \ref{sec:examples}, where $\mathcal{W}$ is based upon a weighted trivial metric. 
	It would be interesting  to extend the results of \cite{saksman2010ergodicity} to the Wasserstein-like function setting of the present paper. 
	
	{Weakening Assumption \ref{assumption_main} in the context of stochastic approximation type algorithms (like those in \cite{andrieu2006ergodicity,saksman2010ergodicity}) may be particularly rewarding.}
	Namely, these methods naturally possess certain stabilisation properties concerning the adaptation, which may be advantageously exploited in proofs.
	Potentially this allows	to drop the uniformity of $\gamma$ in Assumption \ref{assumption_main}, and can	also improve the rates for the ``small $\beta$ regime''.

\section{Examples}\label{sec:examples}

In this section we apply our theoretical results to three different settings, depending on the ergodicity properties of the family $\{ P_\gamma\}_{\gamma \in \mathcal{I}}$.
We start with the case where all kernels are uniformly ergodic,
then, we investigate an AIR scheme relying on geometric ergodic kernels.
Finally, we study algorithms where all the kernels satisfy conditions similar to those shown in \cite{hairer2011asymptotic}. 

Recall that there is some metric $d_\cY$ on $\cY$ such that the space is Polish. 
	If we speak of (semi-) continuity we always refer to that metric. 
\subsection{Uniform Ergodicity}
Here we set $d(y_1,y_2) = \mathds{1}_{\{y_1 \neq y_2\}}$, i.e. $d$ is the trivial metric.
In this case $\mathcal{W}$ coincides with the total variation distance, that is,
\[
\mathcal{W}(\mu_1, \mu_2) = \norm{\mu_1 - \mu_2}_{tv},
\]
for any two probability measures $\mu_1, \mu_2$ on $(\cY, \mathcal{F}_\cY)$.

We rely on the following
\textit{simultaneous uniform ergodicity} condition.

\begin{ass}\label{ass:uniform_ergodicity}
	There exist $\tau \in [0,1)$ and $\Lambda \in (0, \infty)$ such that for any $\gamma \in \mathcal{I}$, $y \in \cY$ and $\ell \in \N$ we have 
	\[
	\norm{P_\gamma^\ell (y, \cdot) - \pi_\gamma}_{tv} \leq \Lambda \tau^\ell. 
	\]
\end{ass} 

\begin{rem}
	Assumption \ref{ass:uniform_ergodicity} is common in the literature, see e.g. \cite{roberts2007coupling,laitinen2024invitationadaptivemarkovchain}, and the references therein. If $\cY\subseteq \R^d$ is compact, then Assumption \ref{ass:uniform_ergodicity} is true for certain adaptive Metropolis algorithms, see e.g. \cite[Section 5]{laitinen2024invitationadaptivemarkovchain}.
\end{rem}

We add another example which allows for an unbounded state space.
	\begin{example}\label{ex:Metropolis_geometric_ergodic}
		We consider adaptive stereographic MCMC algorithms as recently introduced in \cite{bell2025adaptivestereographicmcmc}. 
		In this setting $\cX  = \R^d$ and $\pi_\gamma \equiv \nu$, such that the auxiliary space $\Phi$ is not needed and can be chosen to be a singleton. 
		The main idea is to use the preconditioned stereographic projection to map $\nu$ to the sphere, where either a random walk Metropolis or a geodesic slice sampler\footnote{{See \cite{habeck2024geodesicslicesamplingsphere} for more details about geodesic slice sampling on the sphere.}} is employed.  
		Roughly speaking, adaptation is done by updating the projection map, and, if necessary, some further tuning parameters, we refer to \cite{bell2025adaptivestereographicmcmc}. 
		Suppose the density of $\nu$, denoted by $p\colon \R^d \to [0, \infty)$, satisfies 
		\[
		\limsup_{\norm{x} \to \infty}\left(p(x) (\norm{x}+1)^d \right) < \infty, 
		\]
		where $\norm{\cdot}$ is the Euclidean norm. Then, Assumption \ref{ass:uniform_ergodicity} is true for the adaptive stereographic algorithms of \cite{bell2025adaptivestereographicmcmc}, cf. Lemma 4.1 and Lemma 4.2 there.  
	\end{example}

\begin{thm}\label{thm:convergence_uniform_ergodic_case}
	Let $f \colon \cX \to \R$ be bounded and Assumption \ref{ass:uniform_ergodicity} be true. 
	If $\beta\geq 1$, then for any $\eps>0$ we have that almost surely 
	\[
	\lim_{n \to \infty}\frac{1}{\sqrt{n}(\log n )^{1/2+\eps}} \sum_{j=1}^n \left( f(X_j) - \nu(f)\right) =0.
	\]
\end{thm}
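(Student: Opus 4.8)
The plan is to deduce Theorem~\ref{thm:convergence_uniform_ergodic_case} as a direct application of Theorem~\ref{thm:limit_result_bounded_ergodicity_large_beta}, by verifying that the simultaneous uniform ergodicity Assumption~\ref{ass:uniform_ergodicity} implies the hypotheses of that theorem in the special case of the trivial metric $d(y_1,y_2) = \mathds{1}_{\{y_1 \neq y_2\}}$ on $\cY$. First I would record that with this choice of $d$ the Wasserstein-like function $\mathcal{W}$ is exactly the total variation distance; this is a standard fact (the optimal coupling of $\mu_1,\mu_2$ puts as much mass as possible on the diagonal), and it is already stated in the text preceding the theorem. In particular $d$ is a genuine metric, so $\mathcal{W}$ is the $1$-Wasserstein distance and the Kantorovich--Rubinstein duality gives equality in Lemma~\ref{lem:duality_inequality}, though only the inequality is needed.

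Next I would check Assumption~\ref{assumption_main}. From the definition of the contraction coefficient and the fact that, for any $y_1 \neq y_2$,
\[
\mathcal{W}(P_\gamma^\ell(y_1,\cdot), P_\gamma^\ell(y_2,\cdot)) = \norm{P_\gamma^\ell(y_1,\cdot) - P_\gamma^\ell(y_2,\cdot)}_{tv} \le \norm{P_\gamma^\ell(y_1,\cdot) - \pi_\gamma}_{tv} + \norm{\pi_\gamma - P_\gamma^\ell(y_2,\cdot)}_{tv} \le 2\tau^\ell,
\]
while $d(y_1,y_2) = 1$, we get $\tau(P_\gamma^\ell) \le 2\tau^\ell$ for every $\ell$ and every $\gamma$. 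Hence $\tau(P_\gamma) \le 2 =: M < \infty$ uniformly, and choosing $k_0 \in \N$ large enough that $2\tau^{k_0} < 1$ (possible since $\tau < 1$) gives $\tau(P_\gamma^{k_0}) \le 2\tau^{k_0} =: \tau' \in [0,1)$ uniformly in $\gamma$. This establishes Assumption~\ref{assumption_main} with constants $M, \tau', k_0$ independent of $\gamma$. The bounded eccentricity Assumption~\ref{ass:bounded_eccentricity} is immediate: since $d \le 1$ everywhere, $E_\gamma(y) = \int_\cY d(y,y')\pi_\gamma(\d y') \le 1$ for all $\gamma, y$, so we may take $K = 1$.

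It then remains to check the conditions on the integrand. Given bounded $f\colon \cX \to \R$, set $h = h_f = f\cdot\mathds{1}_\Phi$ as in the text; then $h$ is bounded with the same bound as $f$. For Lipschitz continuity with respect to the trivial metric, note $\abs{h(y_1) - h(y_2)} \le 2\norm{f}_\infty = 2\norm{f}_\infty \cdot d(y_1,y_2)$ whenever $y_1 \neq y_2$, and the inequality is trivial when $y_1 = y_2$; so $h$ is Lipschitz with constant $L = 2\norm{f}_\infty$. All hypotheses of Theorem~\ref{thm:limit_result_bounded_ergodicity_large_beta} are now met, and since $\beta \ge 1$, that theorem yields
\[
\lim_{n\to\infty} \frac{1}{\sqrt{n}(\log n)^{1/2+\eps}}\left(\sum_{j=1}^n f(X_j) - \nu(f)\right) = 0
\]
almost surely for arbitrary $\eps > 0$, which is the claim. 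I do not anticipate any real obstacle here; the only mild point of care is the factor $2$ in $\tau(P_\gamma^\ell) \le 2\tau^\ell$, which forces the two-step reasoning ``$M=2$ for one step, and a fixed power $k_0$ to get genuine contraction'' rather than reading off Assumption~\ref{assumption_main} with $k_0 = 1$ directly.
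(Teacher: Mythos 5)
Your proposal is correct and follows essentially the same route as the paper: with the trivial metric, $\mathcal{W}$ is the total variation distance, Assumption~\ref{assumption_main} follows from the simultaneous uniform ergodicity, the eccentricity is bounded by $1$, boundedness of $f$ gives the Lipschitz property, and Theorem~\ref{thm:limit_result_bounded_ergodicity_large_beta} applies. Your explicit verification of Assumption~\ref{assumption_main} (the factor $2$ from the triangle inequality, taking $M=2$ and $k_0$ with $2\tau^{k_0}<1$) is merely a more detailed write-up of what the paper asserts tersely.
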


\begin{proof}
	Assumption \ref{assumption_main} is satisfied since the total variation distance and $\mathcal{W}$ coincide under the assumptions of this section. 
	Moreover, by definition of $E_\gamma(y)$ we have
	\[
	E_\gamma(y) = \int_\cY d(y, y') \pi_\gamma (\d y') \leq \int_\cY 1 \; \pi_\gamma(\d y') = 1,
	\]
	for any $\gamma \in \mathcal{I}$. 
	Hence also Assumption \ref{ass:bounded_eccentricity} is true. 
	Since $f$ is bounded it is also Lipschitz w.r.t. $d$, so all requirements to apply Theorem \ref{thm:limit_result_bounded_eccentricity_large_beta} are fulfilled. 
\end{proof}

\begin{cor}\label{cor:convergence_speed_uniform_ergodicity}
	Let $f \colon \cX \to \R$ be bounded, Assumption \ref{ass:uniform_ergodicity} be true, $\beta \geq1$ and $\eps >0$.
	Then, there exists some (measurable) $C \colon \Omega \to \R$ such that for $\mathbb{P}$-almost all $\omega \in \Omega$ and any $n \in \N$ we have the following bound,
	\[
	\left\vert  \frac{1}{{n}} \sum_{j=1}^n \left(  f(X_j(\omega)) - \nu(f)  \right)  \right\vert \leq \frac{(\log n)^{1/2+\eps}}{\sqrt{n}}C(\omega) .
	\]
\end{cor}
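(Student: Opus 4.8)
The plan is to obtain this pointwise-in-$n$ statement directly from the almost sure limit in Theorem~\ref{thm:convergence_uniform_ergodic_case}, taking $C$ to be a supremum over $n$. Concretely, writing
\[
a_n(\omega) := \frac{1}{\sqrt{n}\,(\log n)^{1/2+\eps}}\left\vert \sum_{j=1}^n \left( f(X_j(\omega)) - \nu(f)\right)\right\vert ,
\]
I would set $C(\omega) := \sup_{n \in \N} a_n(\omega)$, with $\log n$ understood as $\log(e \vee n)$ (as is implicit in the renormalisation $r(n)$; alternatively one takes the supremum over $n \ge 2$ and enlarges $C$ by the single term $\vert f(X_1)-\nu(f)\vert \le 2\norm{f}_\infty$, using that $f$ is bounded). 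The inequality claimed in the corollary is, after multiplying both sides by $\sqrt{n}/(\log n)^{1/2+\eps}$, precisely $a_n(\omega) \le C(\omega)$, which holds for every $n$ by the definition of $C$ as a supremum.

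The second step is measurability of $C$. For each fixed $n$ the map $\omega \mapsto \sum_{j=1}^n f(X_j(\omega))$ is measurable, being a finite sum of the measurable functions $\omega \mapsto f(X_j(\omega))$; subtracting the constant $n\nu(f)$, taking absolute values, and dividing by the deterministic factor $\sqrt{n}\,(\log n)^{1/2+\eps}$ all preserve measurability. Hence $C = \sup_{n} a_n$ is measurable as a countable supremum of measurable functions.

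The third step, which carries the only real content, is that $C < \infty$ almost surely. Under the hypotheses of the corollary, Theorem~\ref{thm:convergence_uniform_ergodic_case} is applicable and yields $a_n(\omega) \to 0$ as $n \to \infty$ for $\mathbb{P}$-almost every $\omega$. Since every convergent real sequence is bounded, on this full-measure event $\sup_{n} a_n(\omega) < \infty$, i.e. $C(\omega) < \infty$; on the complementary null set one simply redefines $C \equiv 0$, which changes neither measurability nor the almost sure conclusion. Combining the three steps gives the corollary.

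I do not expect a genuine obstacle here. The analytic heart of the matter — the martingale approximation, the bound on the remainder $R_m$, and the almost sure convergence of $M_n / r(n)$ — has already been carried out in the proofs of Theorems~\ref{thm:limit_result_bounded_ergodicity_large_beta} and \ref{thm:convergence_uniform_ergodic_case}. The corollary is a soft reformulation of an almost sure limit as an almost sure bound with a random constant, and the only minor point to be careful about is the convention for $(\log n)^{1/2+\eps}$ at $n=1$.
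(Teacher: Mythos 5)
Your argument is correct and is exactly the (implicit) reasoning behind the corollary in the paper: the paper states it without proof as an immediate consequence of Theorem~\ref{thm:convergence_uniform_ergodic_case}, obtained by taking $C(\omega)=\sup_n a_n(\omega)$, which is finite almost surely since an almost surely convergent sequence is bounded. Your additional care about measurability and the $\log$ convention at $n=1$ is fine and does not change the substance.
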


	\begin{rem}
		A result similar to the one of Theorem~\ref{thm:convergence_uniform_ergodic_case}
		for $\beta \in (0,1)$, with worse rate, can be derived (under the same assumptions) by employing Theorem \ref{thm:limit_result_bounded_eccentricity_small_beta} (instead of Theorem~\ref{thm:limit_result_bounded_eccentricity_large_beta}).  
		However, if additionally Assumption \ref{ass:wanig_adaptation_and_Lipschitz} is satisfied, then Theorem \ref{thm:small_beta_waning_adaptation} can be used to extend Theorem \ref{thm:convergence_uniform_ergodic_case} to the case $\beta\in(0,1)$ (with the same rate as for $\beta\geq 1$).
	\end{rem}

\subsection{Geometric ergodicity}\label{subsec:Geometric_Ergodicity}
Next we consider the case where we have a (simultaneous) drift and minorisation condition.
Assumptions of this form are standard in adaptive MCMC theory, see e.g. \cite{andrieu2006ergodicity,roberts2007coupling,fort2011convergence,saksman2010ergodicity,chimisov2018air}.

\begin{ass}\label{ass:drift_and_minorisation_new}
	There exists $C \in \mathcal{F}_\cY$, a lower semi-continuous measurable function $V \colon \cY \to [1,\infty)$ as well as numbers $\kappa \in (0,1)$, $\delta >0$, $b < \infty$ such that $\sup_{x\in C} V(x) = v< \infty$ and for any $\gamma \in \mathcal{I}$ the following properties are satisfied:
	\begin{itemize}
		\item[i)] For any $y \in \cY$ we have		$P_\gamma V(y) \leq \kappa V(y) + b\mathds{1}_C(y) $. 
		\item[ii)] 	There exists a probability measure $\eta_\gamma$ on $C$ such that for any ${x \in C}$ we have $P_\gamma (x, A )\geq \delta \eta_\gamma(A\cap C)$ for all $A\in\mathcal{F}_\cY$.
	\end{itemize}
\end{ass}

	\begin{example}
	For $d \in \N$ let $\cX = \R^d$ and assume that probability measure $\nu$ satisfies the requirements of \cite[Theorem 10]{saksman2010ergodicity} (super-exponential tails and regular contours), see also \cite{JARNER2000geometric}.
		Moreover, let $\mathcal{I} \subseteq \R^{d \times d}$ be the set of positive definite matrices with all eigenvalues in the interval $[a_1,a_2]$, for some $0< a_1 < a_2 <\infty$.
		Assume that $\pi_\gamma \equiv \nu$ for any $\gamma \in \mathcal{I}$, and that $\Phi$ is a singleton, such that we can identify $\cY$ with $\cX= \R^d$. 
		
		For each $\gamma \in \mathcal{I}$ let $P_\gamma$ correspond to a random walk Metropolis algorithm, using a Gaussian proposal with mean $0\in \R^d$ and covariance $\gamma$. 
		Furthermore, let $V(x)= \sqrt{\sup_{z\in\cX} p(z)}p(x)^{-1/2}$, with $p$ being the density of $\nu$. 
		Then, by virtue of \cite[Proposition 15]{saksman2010ergodicity} it follows that Assumption \ref{ass:drift_and_minorisation_new} is satisfied.  
		In particular, AIR Metropolis algorithms based on covariance approximation of $\nu$ fit into the setting of this example. 
		It should be mentioned that in this context \cite[Proposition 6]{saksman2010ergodicity} can be used to derive convergence rates, similar to those below, for the classical adaptive random walk Metropolis algorithm, where the covariance of $\nu$ is estimated via the empirical covariance of $(X_0, \dots, X_n)$.
	\end{example}

With $V$ from Assumption \ref{ass:drift_and_minorisation_new} let us introduce the following family of lower semi-continuous metrics
\[
d_q(y_1,y_2) = \mathds{1}_{\{y_1 \neq y_2\}}\left( V^q(y_1) + V^q(y_2)\right),
\]
where $q \in (0,1]$. 
Satisfying all properties of a metric, $d_q$ is clearly also a distance-like function.
The set Lipschitz functions with constant $1$ w.r.t. $d_q$ is denoted by $\text{Lip}_{q,1}$, and the corresponding $1$-Wasserstein distance by $\mathcal{W}_q$. 

\begin{thm}\label{thm:dirft_minorisation_main_new}
	Let Assumption \ref{ass:drift_and_minorisation_new} be true and $f \colon \cX \to \R$ such that $h = h_f = \mathds{1}_\Phi f$ satisfies $\vert h \vert \leq V^{1/p} $, with $V$ from Assumption~\ref{ass:drift_and_minorisation_new} and some $p>2$.
	Then, for $\beta>0$ and $\eps> \max\left\{0, \frac{1}{1+\beta} + \frac{1}{p} - \frac{1}{2}\right\}$  we have almost surely
	\[
	\lim_{n \to \infty} \frac{1}{n^{1/2+\eps}} \sum_{j=1}^n \left( f(X_i) - \nu(f) \right) =0.
	\]
\end{thm}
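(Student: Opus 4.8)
The plan is to verify that Assumption \ref{ass:drift_and_minorisation_new} implies the hypotheses of Theorem \ref{thm:limit_result_lyapunov}, after which the conclusion follows directly. Concretely, I would choose the distance-like function to be $d = d_q$ for a suitably small $q \in (0,1]$ (to be pinned down below), so that $\mathcal{W} = \mathcal{W}_q$. With this choice, Assumption \ref{ass:Lyapunov_existence} is exactly part i) of Assumption \ref{ass:drift_and_minorisation_new} (the drift holds uniformly in $\gamma$ with the same $V$, $\kappa$, $b$), and Assumption \ref{ass:Lyapunov_bound} holds with the constant $K=1$ and exponent $p$ precisely when $q = 1/p$, since then $d_q(y_1,y_2) \le V^{1/p}(y_1) + V^{1/p}(y_2)$. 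The hypothesis $|h| \le V^{1/p}$ is assumed, and $h = \mathds{1}_\Phi f$ is automatically Lipschitz w.r.t. $d_q$ with constant controlled by $\sup |f|$-type quantities — here one uses $|h(y_1) - h(y_2)| \le |h(y_1)| + |h(y_2)| \le V^q(y_1) + V^q(y_2) = d_q(y_1,y_2)$ on the set $\{y_1 \ne y_2\}$, so in fact $h \in \mathrm{Lip}_{q,1}$.

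The real work is establishing Assumption \ref{assumption_main}, i.e. the uniform bounds $\tau(P_\gamma) \le M$ and $\tau(P_\gamma^{k_0}) \le \tau < 1$ for the Wasserstein contraction coefficient associated with $d_{1/p}$. This is the classical Hairer–Mattingly–Scheutzow (or Meyn–Tweedie) argument turning a drift-and-minorisation pair into a Wasserstein contraction on a weighted metric; the point is to check that all constants produced are independent of $\gamma$, which is exactly what the "simultaneous" nature of Assumption \ref{ass:drift_and_minorisation_new} guarantees (same $C$, $V$, $\kappa$, $\delta$, $b$, $v$ for every $\gamma$). First I would record the one-step estimate: for $y_1 \ne y_2$ one bounds $\mathcal{W}_q(P_\gamma(y_1,\cdot), P_\gamma(y_2,\cdot))$ by splitting into the case where both points lie in a sublevel set $\{V \le R\}$ (use the minorisation on $C$ together with the drift to force the chain into $C$, gaining a contraction factor strictly below $1$ after $k_0$ steps for $R$, $k_0$ chosen large) and the case where at least one point has $V$ large (use the drift $P_\gamma V^q \le \kappa^q V^q + b'$, valid by Lemma \ref{lem:Lyapunov_function_exponent}, to contract the weight). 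A semimetric version of this for $d_q$ is carried out in \cite{hairer2011asymptotic}; reviewing those estimates and noting $\gamma$-uniformity gives $\tau(P_\gamma^{k_0}) \le \tau$. For the single-step bound $\tau(P_\gamma) \le M$, the triangle-type estimate $\mathcal{W}_q(P_\gamma(y_1,\cdot),P_\gamma(y_2,\cdot)) \le \int V^q \, P_\gamma(y_1,\cdot) + \int V^q \, P_\gamma(y_2,\cdot) \le (\kappa^q+b')(V^q(y_1)+V^q(y_2)) = (\kappa^q + b') d_q(y_1,y_2)$ on $\{y_1 \ne y_2\}$ suffices with $M = \kappa^q + b'$.

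Once Assumption \ref{assumption_main} is in place with $d = d_{1/p}$, I would remark that the eccentricity is finite everywhere — indeed $E_\gamma(y) = \int d_{1/p}(y,y')\,\pi_\gamma(\d y') \le V^{1/p}(y) + \pi_\gamma(V^{1/p}) \le V^{1/p}(y) + (b/(1-\kappa))^{1/p} < \infty$ using the standard bound $\pi_\gamma(V) \le b/(1-\kappa)$ — so Proposition \ref{prop:poisson_equation_wasserstein_setting} applies and Poisson's equation is solvable with the stated eccentricity bound. All three hypotheses of Theorem \ref{thm:limit_result_lyapunov} (Assumptions \ref{assumption_main}, \ref{ass:Lyapunov_existence}, \ref{ass:Lyapunov_bound}) are then verified with this $V$ and this $p$, and the condition $\eps > \max\{0, \tfrac{1}{1+\beta} + \tfrac1p - \tfrac12\}$ is exactly the one in Theorem \ref{thm:limit_result_lyapunov}, so the conclusion is immediate. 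The main obstacle is the $\gamma$-uniform Wasserstein contraction on the weighted semimetric $d_{1/p}$: one must be careful that the minorisation set $C$ being fixed across $\gamma$ and the drift constants being fixed together yield contraction constants $k_0$, $\tau$ that do not depend on $\gamma$, and that the chosen $q = 1/p \in (0,1)$ (legitimate since $p > 2$) is compatible with the contraction argument — lowering $q$ only helps the weight-contraction part, so this causes no trouble.
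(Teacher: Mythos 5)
Your reduction is the same as the paper's: set $q=1/p$, work with the semimetric $d_q$, observe that part i) of Assumption \ref{ass:drift_and_minorisation_new} gives Assumption \ref{ass:Lyapunov_existence}, that $d_q(y_1,y_2)\le V^{1/p}(y_1)+V^{1/p}(y_2)$ gives Assumption \ref{ass:Lyapunov_bound} with $K=1$, that $\vert h\vert \le V^{1/p}$ makes $h$ Lipschitz with constant $1$ w.r.t. $d_q$, and then invoke Theorem \ref{thm:limit_result_lyapunov}; the admissible range of $\eps$ matches. Where you genuinely differ is the verification of Assumption \ref{assumption_main}. The paper does not run a coupling argument: it first proves, in the spirit of \cite[Lemma 2.1]{hairer2011yet}, the duality $\mathcal{W}_q(\mu_1,\mu_2)=\sup_{\vert\psi\vert\le V^q}\vert\mu_1(\psi)-\mu_2(\psi)\vert$, then quotes quantitative simultaneous $V^q$-geometric ergodicity (Baxendale \cite{Baxendale2005renewal}, cf. \cite{andrieu2006ergodicity,roberts2007coupling}) with constants $\alpha_q,\tau_q$ depending only on $(\delta,\kappa,b,v,q)$, and converts this by the triangle inequality into $\mathcal{W}_q(P_\gamma^\ell(y_1,\cdot),P_\gamma^\ell(y_2,\cdot))\le \alpha_q\tau_q^\ell\bigl(V^q(y_1)+V^q(y_2)\bigr)$, so that $k_0$ with $\alpha_q\tau_q^{k_0}<1$ yields the contraction; your crude one-step bound $M=\kappa^q+2b^q$ (using $V^q\ge 1$) is essentially the $\ell=1$ instance of this. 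Your alternative, a direct weak-Harris two-point argument on $d_{1/p}$, is a legitimate and more self-contained route, and lowering $q$ indeed causes no harm; what each approach buys is clear: the paper's citation route is short but outsources the uniformity of the constants, while yours keeps everything explicit at the cost of redoing a Harris-type estimate.

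One point in your sketch needs care. The global drift $P_\gamma V\le\kappa V+b$ forces the chain into sublevel sets $\{V\le R\}$, not into $C$, so the step where you claim the drift ``forces the chain into $C$'' does not follow as stated: the two-point contraction on the small-Lyapunov region requires a minorisation on a sublevel set $\{V\le R\}$ with $R$ of order $2b/(1-\kappa)$ (equivalently, a drift toward $C$, i.e. $P_\gamma V\le\kappa V+b\mathds{1}_C$), whereas Assumption \ref{ass:drift_and_minorisation_new} only provides the minorisation on a set $C$ with $\sup_C V<\infty$. The paper's proof leans on the cited geometric-ergodicity results, whose hypotheses carry the same structural requirement, so this is a shared sensitivity rather than an error specific to you; but if you write the coupling argument out you must make the link between $C$ and the sublevel sets explicit rather than attributing it to the drift alone. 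With that repaired, your constants depend only on $(\delta,\kappa,b,v,q)$ and are therefore $\gamma$-uniform, and the remainder of your plan goes through verbatim.
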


\begin{proof}
	Set $q=1/p \in (0,1]$.
	The goal is to apply Theorem \ref{thm:limit_result_lyapunov}.  
	In the same fashion as \cite[Lemma 2.1]{hairer2011yet} one can prove 
	\[
	\mathcal{W}_q(\mu_1, \mu_2) 
	=
	\sup_{\vert \psi \vert \leq V^{q}} \left\vert  \int_\cY \psi \; \d \mu_1  - \int_\cY \psi \; \d \mu_2 \right\vert,
	\]
	where $V $ is the function from Assumption~\ref{ass:drift_and_minorisation_new} and $\mu_1, \mu_2$ are arbitrary probability measures on $(\cY, \mathcal{F}_\cY)$. 
	Note that the conditions formulated in Assumption~\ref{ass:drift_and_minorisation_new} do also hold for $V^q$ instead of $V$. Therefore, 
	Assumption \ref{ass:drift_and_minorisation_new} implies simultaneous geometric ergodicity:
	There exist $\alpha_q \in \R$ and $\tau_q \in (0,1)$ only depending on $\delta, \kappa, b, v,q$ such that for any $\gamma \in \mathcal{I}$, $y \in \cY$ we have 
	\[
	{\mathcal{W}_q(P_\gamma^\ell(y,\cdot ), \pi_\gamma) = } 
	\sup_{\vert \psi \vert \leq V^{q}} \left\vert  \int_\cY \psi(y') P_\gamma^\ell (y, \d y')  - \pi_\gamma(\psi)\right\vert \leq \alpha_q V^q(y) \tau_q^\ell,
	\]
	for any $\ell \in \N$.
	This follows from \cite{Baxendale2005renewal}, see also \cite[Theorem 2]{andrieu2006ergodicity} \cite[Section 7]{roberts2007coupling}.
	
	Hence, for any $y_1 , y_2 \in \cY$ and any $\ell \in \N$ an application of the triangle inequality yields
	\begin{align*}
		\mathcal{W}_q(P^\ell_\gamma(y_1, \cdot ), P^\ell_\gamma(y_2, \cdot )  )
		\leq \mathcal{W}_q(P_\gamma^\ell(y_1,\cdot ), \pi_\gamma) +  \mathcal{W}_q(P_\gamma^\ell(y_2,\cdot ), \pi_\gamma) \\
		\leq \alpha_q \tau_q^\ell (V^q(y_1) + V^q(y_2)).
	\end{align*}
	This shows that Assumption~\ref{assumption_main} is true (for $\mathcal{W}_q$).
	Moreover, Assumptions~\ref{ass:Lyapunov_existence} and \ref{ass:Lyapunov_bound} (both w.r.t. $d_q$) are immediate consequences of Assumption \ref{ass:drift_and_minorisation_new}.
	
	Finally, we note that since $\vert h \vert \leq V^q$, it follows that $h$ is Lipschitz (with constant $1$) w.r.t. $d_q$. 
	Thus all requirements to apply Theorem \ref{thm:limit_result_lyapunov} are met and the claimed result follows. 
\end{proof}

\begin{cor}\label{cor:rate_geometric_ergodic}
	Let the assumptions of Theorem \ref{thm:limit_result_lyapunov} be satisfied. Then, for any $\beta \in (0,\infty)$ and $\eps>\max\left\{0, \frac{1}{1+\beta}+\frac{1}{p}- \frac{1}{2} \right\}$ there exists some (measurable) $C \colon \Omega \to \R$ such that for $\mathbb{P}$-almost all $\omega \in \Omega$ and any $n\in \N$ we have 
	\[
	\left\vert  \frac{1}{{n}} \sum_{j=1}^n \left(  f(X_j(\omega)) - \nu(f)  \right)  \right\vert \leq \frac{1}{n^{1/2 - \eps}}C(\omega) .
	\]
\end{cor}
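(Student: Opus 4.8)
The plan is to derive the corollary directly from Theorem \ref{thm:dirft_minorisation_main_new} (equivalently Theorem \ref{thm:limit_result_lyapunov}) by converting the almost-sure limit statement into an almost-sure boundedness statement. The key observation is elementary: if a sequence $(a_n)_{n\in\N}$ of real numbers satisfies $a_n \to 0$, then $\sup_{n\in\N} \abs{a_n} < \infty$. Applying this pathwise to $a_n = a_n(\omega) = \frac{1}{n^{1/2+\eps}}\sum_{j=1}^n (f(X_j(\omega)) - \nu(f))$, which by Theorem \ref{thm:dirft_minorisation_main_new} tends to $0$ for almost all $\omega$, we obtain a measurable set $\Omega_0$ with $\mathbb{P}(\Omega_0) = 1$ such that for each $\omega \in \Omega_0$ the quantity
\[
C(\omega) := \sup_{n \in \N} \frac{1}{n^{1/2+\eps}}\left\vert \sum_{j=1}^n \left( f(X_j(\omega)) - \nu(f) \right) \right\vert
\]
is finite. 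Setting $C(\omega) = 0$ (say) for $\omega \notin \Omega_0$ gives a map $C \colon \Omega \to \R$ which is measurable, being the supremum of countably many measurable functions on $\Omega_0$ and constant elsewhere.

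With this $C$ in hand, the claimed inequality is immediate: for any $\omega \in \Omega_0$ and any $n \in \N$,
\[
\left\vert \frac{1}{n}\sum_{j=1}^n \left( f(X_j(\omega)) - \nu(f) \right) \right\vert
= \frac{1}{n^{1/2+\eps}} \cdot \frac{n^{1/2+\eps}}{n}\left\vert \sum_{j=1}^n \left( f(X_j(\omega)) - \nu(f) \right) \right\vert
\leq \frac{n^{1/2+\eps}}{n} C(\omega) = \frac{1}{n^{1/2-\eps}} C(\omega),
\]
which is exactly the bound stated in the corollary. The hypotheses on $\beta$ and $\eps$ are precisely those of Theorem \ref{thm:dirft_minorisation_main_new}, so nothing further is needed to invoke it.

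I do not anticipate a genuine obstacle here; the corollary is a routine reformulation. The only point requiring a small amount of care is the measurability of $\omega \mapsto C(\omega)$: one should note that the Monte Carlo sum $\omega \mapsto \sum_{j=1}^n(f(X_j(\omega))-\nu(f))$ is $\mathcal{F}_n$-measurable for each fixed $n$ (since $f$ is measurable and $\nu(f)$ is a constant — finiteness of $\nu(\abs{f})$ follows from $\abs{h}\le V^{1/p}$ and $\pi_\gamma(V) < \infty$), hence the countable supremum defining $C$ is measurable, and redefining $C$ on the $\mathbb{P}$-null complement of $\Omega_0$ preserves measurability. One could equivalently phrase the result without explicitly exhibiting $C$, simply stating that $\sup_{n} n^{\eps - 1/2}\abs{S_n f - \nu(f)}^{-1}$... but the displayed form is cleanest. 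If desired, the identical argument applied to Theorem \ref{thm:convergence_uniform_ergodic_case} yields Corollary \ref{cor:convergence_speed_uniform_ergodicity} in the same way.
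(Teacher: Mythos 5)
Your argument is correct and is exactly the (routine) deduction the paper leaves implicit: the almost-sure limit from Theorem \ref{thm:limit_result_lyapunov} (via Theorem \ref{thm:dirft_minorisation_main_new}) implies pathwise boundedness of $n^{-1/2-\eps}\bigl\vert \sum_{j=1}^n (f(X_j)-\nu(f))\bigr\vert$, and taking $C(\omega)$ to be the countable supremum (set to $0$ off the full-measure set) gives a measurable bound equivalent to the stated inequality. Only the throwaway aside near the end is garbled --- the quantity you mean is $\sup_n n^{\eps-1/2}\bigl\vert \sum_{j=1}^n(f(X_j)-\nu(f))\bigr\vert$, not its reciprocal --- but this does not affect the proof.
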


Clearly, we want to have $\eps>0$ as small as possible in Corollary~\ref{cor:rate_geometric_ergodic} to obtain 
an as good as possible rate of convergence. Intuitively, for fixed $\beta $ we may choose $\eps$ small if $p$ is large. 
This can be interpreted as follows: For large $p$ the requirement $\vert h \vert \leq V^{1/p}$ gets more restrictive, such that ``easier'' functions are considered, leading to a better rate.  

\subsection{Weak Harris ergodicity}
In this section we investigate kernels which satisfy a contraction condition similar to the one which one obtains from the \textit{weak Harris theorem} of \cite{hairer2011asymptotic}.
For certain (classical) MCMC algorithms contraction results, similar to those required in this section, were shown in \cite{hairer2014spectralgaps,hosseini2023spectral}.
In the adaptive setting, however, we are not aware of convergence results which are based on this kind of contraction assumptions.  

Throughout the section we assume that $d \colon \cY \times \cY \to [0,1]$ is a distance-like function. 
\begin{ass}\label{ass:weak_harris_ergodicity_Lyapunov}
	There exists a lower semi-continuous function $V \colon \cY \to [1, \infty)$ which is a Lyapunov function for all $P_\gamma$ simultaneously, i.e. Assumption \ref{ass:Lyapunov_existence} is satisfied.
\end{ass}

Following \cite{hairer2011asymptotic} we define the following distance-like function(s) 
\[
\widetilde{d}_q (y_1, y_2) = \sqrt{d(y_1, y_2) (1+ V^q(y_1) + V^q(y_2))},
\]
where $q \in (0,1)$ and $V$ is from Assumption \ref{ass:weak_harris_ergodicity_Lyapunov}.
We denote the corresponding Wasserstein-like function and contraction coefficient by $\widetilde{\mathcal{W}}_q$ and $\widetilde{\tau}_q$, respectively.
Moreover, we rely on the following requirement.
\begin{ass}\label{ass:weak_harris_ergodicity_contraction}
	There exist numbers $k_0 \in \N$, $\tau \in [0,1)$, $M \in (0, \infty)$ and $p > 2$ such that for any $\gamma \in \mathcal{I}$ we have
	\[ 
	\widetilde{\tau}_{1/p}(P_\gamma^{k_0})
	\leq \tau 
	\qquad
	\text{ and } \qquad
	\widetilde{\tau}_{1/p}(P_\gamma)
	\leq M.
	\]
\end{ass}

\begin{rem}
	A contraction assumption w.r.t. $\widetilde{\mathcal{W}}_q$ can be checked by using the weak Harris theorem of \cite{hairer2011asymptotic}.
	It is worth noting that the mentioned result is suitable for infinite dimensional settings, see also \cite{hairer2014spectralgaps,hosseini2023spectral}. 
\end{rem}

\begin{thm}\label{thm:weak_harris_ergodic_limit_theorem}
	Let Assumptions \ref{ass:weak_harris_ergodicity_Lyapunov} and \ref{ass:weak_harris_ergodicity_contraction} be true and $f \colon \cX \to \R$ be such that $h = f \cdot \mathds{1}_{\Phi}$ is Lipschitz w.r.t. $\widetilde{d}_{1/p}$  and $\vert h \vert\le V^{1/p}$, where $V,p $ are as in Assumption \ref{ass:weak_harris_ergodicity_contraction}.
	Then, for any $\beta>0$ and $\eps> \max\left\{0, \frac{1}{1+\beta} + \frac{1}{p} - \frac{1}{2}\right\}$ we have that almost surely
	\[
	\lim_{n \to \infty} \frac{1}{n^{1/2+\eps}} \sum_{j=1}^n \left( f(X_i) - \nu(f) \right) =0.
	\]
\end{thm}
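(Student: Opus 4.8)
The plan is to verify that the hypotheses of Theorem \ref{thm:limit_result_lyapunov} hold when one works with the distance-like function $\widetilde{d}_{1/p}$ in place of the generic $d$, and then simply invoke that theorem. First I would check Assumption~\ref{assumption_main}: this is immediate from Assumption~\ref{ass:weak_harris_ergodicity_contraction}, which states exactly that $\widetilde\tau_{1/p}(P_\gamma^{k_0}) \le \tau$ and $\widetilde\tau_{1/p}(P_\gamma) \le M$ uniformly in $\gamma$, with $\pi_\gamma$ invariant for $P_\gamma$ by the standing assumptions in Section~\ref{sec:preliminaries}. Next I would check Assumption~\ref{ass:Lyapunov_existence}: this is Assumption~\ref{ass:weak_harris_ergodicity_Lyapunov}, i.e.\ $V$ is a simultaneous Lyapunov function for all $P_\gamma$. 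Finally, Assumption~\ref{ass:Lyapunov_bound} requires $\widetilde{d}_{1/p}(y_1,y_2) \le K(V^{1/p}(y_1) + V^{1/p}(y_2))$ for some $K$; here I would use that $d \le 1$, so
\[
\widetilde{d}_{1/p}(y_1,y_2) = \sqrt{d(y_1,y_2)\bigl(1 + V^{1/p}(y_1) + V^{1/p}(y_2)\bigr)} \le \sqrt{1 + V^{1/p}(y_1) + V^{1/p}(y_2)},
\]
and since $V \ge 1$ the right-hand side is bounded by a constant multiple of $V^{1/p}(y_1) + V^{1/p}(y_2)$ (using $\sqrt{a+b+c} \le \sqrt{a} + \sqrt{b} + \sqrt{c}$ and $\sqrt{V^{1/p}} \le V^{1/p}$); this gives the claim with, say, $K = 3$, exploiting the same exponent $p>2$ as in the theorem.

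With those three assumptions in hand, it only remains to observe that the integrand hypothesis matches: by assumption $h = f\cdot\mathds{1}_\Phi$ is Lipschitz with respect to $\widetilde{d}_{1/p}$ and satisfies $\vert h\vert \le V^{1/p}$, which is precisely the requirement on $h$ in Theorem~\ref{thm:limit_result_lyapunov} (after rescaling the Lipschitz constant, which does not affect the conclusion). Then for any $\beta > 0$ and any $\eps > \max\{0, \tfrac{1}{1+\beta} + \tfrac{1}{p} - \tfrac12\}$ the conclusion of Theorem~\ref{thm:limit_result_lyapunov} gives exactly the stated almost sure convergence of $n^{-1/2-\eps}\sum_{j=1}^n (f(X_j) - \nu(f))$ to $0$.

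The only genuinely substantive point, and the one I would be most careful about, is the verification of Assumption~\ref{ass:Lyapunov_bound}: one must exploit the boundedness $d \le 1$ assumed at the start of the subsection to absorb the $\sqrt{d}$ factor, and then reconcile the two distinct roles played by the square root in $\widetilde d_{1/p}$ versus the linear bound demanded by Assumption~\ref{ass:Lyapunov_bound}. Everything else is a direct translation of the present setting into the framework of Section~\ref{sec:Wasserstein_theory}, so no new estimates are needed; in particular the Kantorovich-type duality (Lemma~\ref{lem:duality_inequality}) and the Poisson equation bound (Proposition~\ref{prop:poisson_equation_wasserstein_setting}) are already available in the required generality because they were stated for arbitrary distance-like functions.
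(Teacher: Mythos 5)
Your proposal is correct and follows essentially the same route as the paper: verify Assumptions~\ref{assumption_main}, \ref{ass:Lyapunov_existence} and \ref{ass:Lyapunov_bound} for the distance-like function $\widetilde{d}_{1/p}$ and then invoke Theorem~\ref{thm:limit_result_lyapunov}. In fact you make explicit the only step the paper leaves implicit, namely the verification of Assumption~\ref{ass:Lyapunov_bound} via $d \le 1$, $\sqrt{a+b+c} \le \sqrt{a}+\sqrt{b}+\sqrt{c}$ and $V \ge 1$, and that computation is sound.
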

\begin{proof}
	Clearly, Assumptions \ref{assumption_main} and \ref{ass:Lyapunov_existence} are true.
	Moreover, by definition of $\widetilde{d}_{1/p}$ also Assumption \ref{ass:Lyapunov_bound} (w.r.t. $\widetilde{d}_{1/p}$) is satisfied. 
	
	Since $h$ is Lipschitz and bounded by $V^{1/p}$ all requirements to apply Theorem \ref{thm:limit_result_lyapunov} are met and the result follows. 
\end{proof}

A result about path-wise convergence, akin to Corollary \ref{cor:rate_geometric_ergodic}, can be deduced from Theorem \ref{thm:weak_harris_ergodic_limit_theorem}. However, we omit a detailed discussion here.

\begin{rem}
	A contraction for the pCN kernel w.r.t. $\widetilde{\mathcal{W}}_q$ was shown in \cite{hairer2014spectralgaps}, being valid for a tuning parameter between $(0,\frac{1}{2}]$.
	Hence Theorem \ref{thm:weak_harris_ergodic_limit_theorem} may be useful to analyse the convergence of an AIR version of that method. 
	Ideas about the adaptation of a pCN algorithm can be found in \cite{LAW2014127,HU2017492,rudolf2018generalization}.
\end{rem}

\section*{Acknowledgements}
The authors thank Mareike Hasenpflug and Thomas M\"uller-Gronbach for helpful discussions, in particular related to questions of measurability and martingales. 

{JH is supported by the EPSRC grant EP/W026899/2 - MaThRad.}
JH and DR gratefully acknowledge support
of the DFG within project 432680300 – SFB 1456 subproject B02. K{\L} was supported by the Royal Society through the Royal Society University Research Fellowship. 
GR is partly supported by the following grants: 
EP/X028119/1, EP/R034710/1, EP/R018561 and EP/V009478/1 (EPSRC) and the OCEAN ERC synergy grant (UKRI).
Major parts of this paper were proven while JH was a visiting research fellow at the University of Warwick supported by the Royal Society and SFB 1456.

\begin{appendix}
\section{Auxiliary results}
\label{app1}
\renewcommand{\thesection}{\Alph{section}}
\setcounter{section}{1}		
The upcoming limit result follows immediately from the Corollary on page 109 of \cite{chow1960martingale} with the choice $\alpha = 2$ and $c_k = \frac{1}{\sqrt{n} (\log n)^{1/2 + \eps}}$, where $\eps>0$. 
\begin{lem}\label{lem:martingale_almost_sure_convergence}
	Let $(\xi_n)_{n \in \N}$ be a martingale (w.r.t. to some filtration $(\mathcal{G}_n)_{n \in \N}$) with differences uniformly bounded in $L^2$. 
	Then, for any $\eps>0$ we have $\mathbb{P}$-almost surely 
	\[
	\lim_{n \to \infty} \frac{1}{\sqrt{n}(\log n)^{1/2 + \eps}} \xi_n =0  .
	\]
\end{lem}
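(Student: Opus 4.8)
The plan is to reduce the statement to a standard almost-sure convergence criterion for martingales, namely a martingale version of Kronecker's lemma combined with a convergence theorem for $L^2$-bounded martingale series. First I would write $\xi_n = \sum_{k=1}^n \eta_k$ where $\eta_k = \xi_k - \xi_{k-1}$ are the martingale differences (setting $\xi_0 = 0$, or absorbing $\xi_1$ into the first term), so that by hypothesis $\sup_k \E[\eta_k^2] =: \sigma^2 < \infty$. The renormalising sequence is $b_n = \sqrt{n}(\log n)^{1/2+\eps}$, which is nondecreasing to $\infty$ for $n$ large. The key observation is that the ``weighted'' series $\sum_{k\geq 2} \eta_k / b_k$ is itself an $L^2$-bounded martingale transform: its increments $\eta_k/b_k$ are $\mathcal{G}_k$-measurable, centered given $\mathcal{G}_{k-1}$, and satisfy
\[
\sum_{k=2}^\infty \E\!\left[\left(\frac{\eta_k}{b_k}\right)^2\right]
\leq \sigma^2 \sum_{k=2}^\infty \frac{1}{k(\log k)^{1+2\eps}} < \infty,
\]
the last sum converging by the integral test precisely because $1+2\eps > 1$. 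Hence the partial sums $W_n := \sum_{k=2}^n \eta_k/b_k$ form a martingale bounded in $L^2$, so by the martingale convergence theorem $W_n$ converges almost surely (and in $L^2$) to a finite limit $W_\infty$.

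Next I would invoke Kronecker's lemma: if $b_n \uparrow \infty$ and $\sum_k \eta_k/b_k$ converges, then $\frac{1}{b_n}\sum_{k=1}^n \eta_k \to 0$. Applying this pathwise on the almost-sure event where $W_n$ converges yields
\[
\frac{1}{b_n} \xi_n = \frac{1}{\sqrt{n}(\log n)^{1/2+\eps}} \sum_{k=1}^n \eta_k \longrightarrow 0
\]
almost surely, which is exactly the claim. (The single term $\eta_1 = \xi_1$ contributes $\xi_1/b_n \to 0$ trivially and can be handled separately, so starting the weighted sum at $k=2$ costs nothing.)

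The main obstacle is essentially bookkeeping rather than conceptual: one must make sure the two classical tools being combined are available in the exact form needed. The $L^2$-martingale convergence theorem is standard, and Kronecker's lemma is a purely deterministic statement about real sequences, so the only thing to verify carefully is that $b_n$ is eventually monotone increasing and unbounded (true) and that the series bound $\sum 1/(k(\log k)^{1+2\eps}) < \infty$ genuinely uses $\eps > 0$ — which is where the $(\log n)^{1/2+\eps}$ factor, as opposed to a bare $\sqrt{n}$, is indispensable. It may be cleanest to cite the iid version in \cite{klenke2013probability} and note that its proof goes through verbatim once ``independent, centered, $L^2$-bounded'' is replaced by ``martingale differences, $L^2$-bounded'', since independence is used there only to get orthogonality of the summands $\eta_k/b_k$, and martingale differences are already orthogonal in $L^2$.
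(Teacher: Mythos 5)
Your proof is correct, but it takes a genuinely different route from the paper's. You use the classical Kolmogorov-type two-step argument: the weighted increments $\eta_k/b_k$ form an $L^2$-bounded martingale (orthogonality of martingale differences plus convergence of $\sum_k k^{-1}(\log k)^{-1-2\varepsilon}$), so the series converges almost surely by the $L^2$-martingale convergence theorem, and the deterministic Kronecker lemma then yields $\xi_n/b_n \to 0$ pathwise; your treatment of the $k=1$ term and of the monotonicity of $b_n$ is the right bookkeeping, and your closing remark that independence in the textbook proof is only used for orthogonality is exactly the point. The paper instead argues by dyadic blocking: with $k_n = 2^n$ it bounds $\E[\xi_{k_n}^2] \le c\,k_n$ using the same orthogonality of the differences, applies Doob's $L^2$ maximal inequality on each block to get $\mathbb{P}\left[\max_{1\le k\le k_n}|\xi_k| > \delta\, v(k_n)\right] = O\!\left(n^{-(1+2\varepsilon)}\right)$, and concludes with Borel--Cantelli. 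Both proofs rest on the identical $L^2$ computation for the increments; yours trades the maximal inequality and Borel--Cantelli for the martingale convergence theorem plus Kronecker's lemma and is the more standard reduction, while the paper's blocking argument is more self-contained and displays directly how the $(\log n)^{1/2+\varepsilon}$ factor produces the summable tail $n^{-(1+2\varepsilon)}$. In both cases $\varepsilon>0$ enters only through a summability condition, so the two approaches give the same statement with essentially the same strength.
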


The next result shows that if $V$ is a Lyapunov function also $V^q$ is such  a function whenever $q\in (0,1)$. 
This fact is well known from the literature and we include a proof only to make the paper self contained.  

\begin{lem}\label{lem:Lyapunov_function_exponent}
	Under Assumption \ref{ass:Lyapunov_existence} for any $y \in \cY$, $\gamma \in \mathcal{I}$ and $q \in (0,1]$ we have 
	\[
	P_\gamma V^{q}(y) \leq \kappa^q V^{q}(y) +  b^q,
	\]
	with $\kappa, b$ as in Assumption \ref{ass:Lyapunov_existence}.
\end{lem}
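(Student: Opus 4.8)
The plan is to reduce the claim about $V^q$ to the given drift inequality for $V$ by exploiting concavity of the map $t \mapsto t^q$ on $[0,\infty)$ for $q \in (0,1]$. First I would fix $y \in \cY$ and $\gamma \in \mathcal{I}$ and apply Jensen's inequality to the probability measure $P_\gamma(y,\cdot)$: since $t \mapsto t^q$ is concave,
\[
P_\gamma V^q(y) = \int_\cY V^q(y') \, P_\gamma(y,\d y') \leq \left( \int_\cY V(y') \, P_\gamma(y,\d y') \right)^q = \left( P_\gamma V(y) \right)^q.
\]
Then I would invoke Assumption~\ref{ass:Lyapunov_existence} to bound $P_\gamma V(y) \leq \kappa V(y) + b$, so that $P_\gamma V^q(y) \leq (\kappa V(y) + b)^q$.

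The remaining step is the elementary inequality $(a+c)^q \leq a^q + c^q$ valid for $a,c \geq 0$ and $q \in (0,1]$ (subadditivity of $t \mapsto t^q$), applied with $a = \kappa V(y)$ and $c = b$. This yields
\[
P_\gamma V^q(y) \leq (\kappa V(y))^q + b^q = \kappa^q V^q(y) + b^q,
\]
which is exactly the claimed bound. To keep the proof self-contained I would include the one-line justification of $(a+c)^q \leq a^q + c^q$: assuming $a+c > 0$, write $\tfrac{a^q + c^q}{(a+c)^q} = \left(\tfrac{a}{a+c}\right)^q + \left(\tfrac{c}{a+c}\right)^q \geq \tfrac{a}{a+c} + \tfrac{c}{a+c} = 1$, where the inequality uses $t^q \geq t$ for $t \in [0,1]$ and $q \leq 1$; the case $a+c=0$ is trivial.

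I do not expect any serious obstacle here: the only mild point is making sure Jensen's inequality applies, which it does since $P_\gamma(y,\cdot)$ is a probability measure and $V^q$ is measurable and nonnegative (and the integral $\int V \, dP_\gamma(y,\cdot) = P_\gamma V(y)$ is finite by the drift condition, so the right-hand side is well defined). One could alternatively note that the result is only needed for $q = 1/p$ with $p > 2$ in the applications, but stating it for all $q \in (0,1]$ costs nothing. This also recovers $\kappa^q < 1$ automatically since $\kappa \in (0,1)$ and $q > 0$, so $V^q$ is indeed again a Lyapunov function.
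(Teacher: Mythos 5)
Your proof is correct and follows essentially the same route as the paper: Jensen's inequality applied to the probability measure $P_\gamma(y,\cdot)$ to get $P_\gamma V^q(y)\leq(\kappa V(y)+b)^q$, followed by subadditivity of $t\mapsto t^q$ for $q\in(0,1]$. The only cosmetic difference is that you prove the subadditivity inequality inline, whereas the paper cites Hardy--Littlewood--P\'olya for it.
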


\begin{proof}
	By Jensen's inequality holds $(P_\gamma V^{q}(y)) \leq (\kappa V(y) + b)^q$.
	Since $q\leq 1$, subadditivity\footnote{A function $\psi \colon \R \to \R $ is called subadditive if $\psi(x+y) \leq \psi(x) + \psi(y)$ for any $x,y \in \R$. In particular, subadditivity of $z \mapsto z^q$, with $q <1$ follows from §103 on p. 83 of \cite{hardy1952inequalities}.} of $z \mapsto z^q$ together with the above estimate implies
	\[
	P_\gamma V^{q}(y) \leq \left( \kappa V(y) + b \right)^{q} \leq \kappa^q V^{q}(y) + b^q.
	\] 
\end{proof}

\section{Measurability of eccentricities}\label{sec:measurability_of_E_Gamma}	
Here we provide a proof for the measurability of the $E_{\Gamma_j}(Y_k)$'s, with $j,k \in \N$. 	
For the sake of brevity fix $j,k \in \N$ and set $Y_k =Y$ as well as $\Gamma_j = \Gamma$.
We begin with showing that  
\begin{equation}\label{eq:measurability_of_E_Gamma}
	(\omega,\omega') \mapsto \int_\cY \mathds{1}_{C} ( \omega',y) \pi_{\Gamma(\omega)} (\d y) 
\end{equation}
is measurable for any $C \in \mathcal{F} \otimes \mathcal{F}_\cY =: \mathcal{G}$, considered as mapping from $(\Omega\times \Omega, \mathcal{F}\otimes \mathcal{F})$ to the reals. 
Thereto we define the following set 
\[
\mathscr{D} = \left\{ C \in   \mathcal{G} \colon  (\omega,\omega') \mapsto \int_\cY \mathds{1}_{C} ( \omega',y) \pi_{\Gamma(\omega)} (\d y) \quad \text{is measurable} \right\}. 
\]
We note that $\mathscr{D}$ is a Dynkin system and sets of the form $A \times B$, with $A \in \mathcal{F}$ and $B \in \mathcal{F}_\cY$, are contained in $\mathscr{D}$. 
Now the claimed measurability relation of \eqref{eq:measurability_of_E_Gamma} follows by the Dynkin system theorem. 

The mapping $\omega \mapsto (\omega, Y(\omega))$ is measurable\footnote{Considered as a mapping from $(\Omega, \mathcal{F})$ to $(\Omega\times\Omega, \mathcal{F}\otimes \mathcal{F})$.} so that for any $C \in \mathcal{G}$ we have measurability of 
\[
\omega \mapsto \int_\cY \mathds{1}_{C} (Y(\omega), \omega) \pi_{\Gamma(\omega)} (\d y). 
\]
Since the distance-like function $d$ is measurable (recall that it is lower semi-continuous) we deduce the measurability of $E_\Gamma(Y)$ by the usual approximation arguments with simple functions.

\section{A non-convergent AIR algorithm}\label{sec:appendix_counter_example} 
	We present an AIR method where Assumption \ref{assumption_main} is not true, and where no law of large numbers is possible. 
	It is a simple modification of a well-studied example in the adaptive MCMC literature, see e.g. \cite{andrieu2006ergodicity}.  
	Let $\gamma \in (0,1)$ and consider the transition kernel (matrix), acting on the state space $\{1,2\}$, given by
	\[
	P_\gamma = \begin{pmatrix}
		1- \gamma & \gamma\\
		\gamma & 1-\gamma
	\end{pmatrix}.
	\]
	It is easily checked that $\pi=(1/2, 1/2)$ is the invariant distribution for any $\gamma \in (0,1)$. 
	
	Choosing our distance-like function to be $d(x,y) = \mathds{1}_{\{x \neq y\}}$ implies that $\mathcal{W}$ coincides with the total variation distance.
	Straightforwardly, by the ``absolute sum" representation of $\Vert \cdot \Vert_{tv}$, one obtains
	\[
	\tau(P_\gamma) = \norm{P_\gamma(1, \cdot ) - P_\gamma(2, \cdot )}_{tv} = \frac{1}{2}\sum_{i=1}^2 
	\vert P_\gamma(1, i) - P_\gamma(2, i )\vert 
	= \vert 1-2\gamma\vert,
	\]
	such that $\tau(P^\ell_\gamma) \leq \vert 1-2\gamma\vert^\ell$. 
	We claim, what the former upper bound already indicates, that Assumption \ref{assumption_main} is not satisfied, i.e. that $\sup_{\gamma\in(0,1)}
	\tau(P^\ell_\gamma)=1$.
	Namely, for fixed $\ell \in \N$ a lower bound for $P_\gamma^\ell (1, 1)$ is the probability of ``staying at $1$'', which is simply $(1-\gamma)^\ell$. Hence $(1-\gamma)^\ell \leq P_\gamma^\ell (1, 1) \leq 1$, such that $P_\gamma^\ell (1,1) \to 1$ as $\gamma \to 0$.
	By symmetry the same arguments show $P_\gamma^\ell (2, 2) \to 1$, and thus $P_\gamma^\ell (2, 1) \to 0$, as $\gamma \to 0$.
	Now, well-known properties of the total variation distance (note that we have defined it via the Wasserstein distance here) imply 
	\begin{align*}
		\norm{P_\gamma^\ell(1, \cdot ) - P_\gamma^\ell(2, \cdot )}_{tv} & =  \sup_{A \subseteq \{1,2\}} \left\vert \sum_{x\in A}  (P_\gamma^\ell(1, x ) - P_\gamma^\ell(2, x ) )\right\vert \\
		& \geq \vert P_\gamma^\ell(1, 1 ) - P_\gamma^\ell(2, 1 ) \vert,
	\end{align*}
	such that $\sup_{\gamma \in (0,1)}\norm{P_\gamma^\ell(1, \cdot ) - P_\gamma^\ell(2, \cdot )}_{tv} = 1$ and hence $\sup_{\gamma \in (0,1)} \tau(P_\gamma^\ell) = 1$.

	Next we provide an AIR algorithm based on the family $\{P_\gamma\}_{\gamma \in (0,1)}$, where the law of large numbers is not satisfied. 
	Let $(T_\ell)_{\ell \in \N}$ be a sequence of adaptation times, with $T_\ell = \sum_{j=1}^\ell k_j$, as specified in Section \ref{sec:preliminaries} (with some arbitrary but fixed $\beta >0$).
	For each $j\in \N_0$ let $\gamma_j  \in (0,1)$ be chosen such that $(1-\gamma_j)^{k_j} =1-e^{-j-1}$ (with $k_0 :=1$).
	
	Now, let $(Y_n)_{n \in \N_0}$ correspond to an AIR algorithm initialised at $Y_0 = 1$, and afterwards using $P_{\gamma_j}$ between steps $T_j +1$ and $T_{j+1}$. 
	Note that asking for the probability of $Y_0=Y_1= \dots = Y_n = 1$ is equivalent to considering $n$ independent Bernoulli trials, with success probabilities $1-\gamma_0$ for $Y_1=1$; $1-\gamma_1$ for $Y_2=1$; $1-\gamma_2$ for $Y_3=1, \dots , Y_{T_2}=1$ etc., whence
	\[
	\mathbb{P}\left[ Y_j = 1 \text{ for }0 \leq j \leq T_\ell  \right] = \prod_{j=0}^\ell \gamma_j^{k_j} = \prod_{j=0}^\ell (1-e^{-j-1}). 
	\]
	It is not hard to verify that $\lim_{\ell\to \infty}\prod_{j=0}^\ell (1-e^{-j-1}) = \theta \in (0,1)$, such that $\mathbb{P}[X_j =1 \text{ for any }j \in \N_0] =\theta >0$. This clearly shows that the empirical averages based upon $(X_n)_{n \in \N_0}$ do not satisfy a law of large numbers in general. 
\end{appendix}

\providecommand{\bysame}{\leavevmode\hbox to3em{\hrulefill}\thinspace}
\providecommand{\MR}{\relax\ifhmode\unskip\space\fi MR }
\providecommand{\MRhref}[2]{%
	\href{http://www.ams.org/mathscinet-getitem?mr=#1}{#2}
}
\providecommand{\href}[2]{#2}

\noindent
\textbf{Author's addresses:}

\noindent
Julian Hofstadler, University of Bath, Department of Mathematical Sciences, Calverton Down, BA2 7AY Bath, United Kingdom. 

\noindent
\textit{Email:} jh4272@bath.ac.uk

\medskip
\noindent
Krzysztof {{\L}atuszy\'{n}ski}, University of Warwick, Department of Statistics, CV47AL Coventry, United Kingdom. 

\noindent
\textit{Email:} K.G.Latuszynski@warwick.ac.uk   

\medskip
\noindent
Gareth O. Roberts, University of Warwick, Department of Statistics, CV47AL Coventry, United Kingdom. 

\noindent
\textit{Email:} Gareth.O.Roberts@warwick.ac.uk

\medskip
\noindent
Daniel Rudolf, Universit\"at Passau, Faculty for Computer Science and Mathematics, Innstraße 33, 94032 Passau, Germany. 

\noindent
\textit{Email:} daniel.rudolf@uni-passau.de   


\begin{thebibliography}{HHKR24}
	
	\bibitem[AA07]{andrieu2007efficiency}
	Christophe Andrieu and Yves~F. Atchadé, \emph{On the efficiency of adaptive
		{MCMC} algorithms}, Electronic Communications in Probability \textbf{12}
	(2007), 336 -- 349.
	
	\bibitem[AF10]{atchade2010limit}
	Yves~F. Atchadé and Gersende Fort, \emph{Limit theorems for some adaptive MCMC
		algorithms with subgeometric kernels}, Bernoulli \textbf{16} (2010), no.~1,
	116--154.
	
	\bibitem[AF12]{atchade2012limit}
	\bysame, \emph{Limit theorems for some adaptive {MCMC} algorithms with
		subgeometric kernels: Part {II}}, Bernoulli \textbf{18} (2012), no.~3,
	975--1001.
	
	\bibitem[ALR13]{atchade2013Bayesian}
	Yves~F. Atchadé, Nicolas Lartillot, and Christian Robert, \emph{Bayesian
		computation for statistical models with intractable normalizing constants},
	Brazilian Journal of Probability and Statistics \textbf{27} (2013), no.~4,
	416--436.
	
	\bibitem[AM06]{andrieu2006ergodicity}
	Christophe Andrieu and {\'E}ric Moulines, \emph{On the ergodicity properties of
		some adaptive {MCMC} algorithms}, The Annals of Applied Probability
	\textbf{16} (2006), no.~3, 1462--1505.
	
	\bibitem[AR05]{atchade2005adaptive}
	Yves~F. Atchadé and Jeffrey~S. Rosenthal, \emph{On adaptive {M}arkov chain
		{M}onte {C}arlo algorithms}, Bernoulli \textbf{11} (2005), no.~5, 815--828.
	
	\bibitem[AW15]{atchade2015convergence}
	Yves F. Atchad{\'e} and Yizao Wang, \emph{On the convergence rates of some
		adaptive Markov chain Monte Carlo algorithms}, Journal of Applied Probability
	\textbf{52} (2015), no.~3, 811--825.
	
	\bibitem[Bax05]{Baxendale2005renewal}
	Peter~H. Baxendale, \emph{{Renewal theory and computable convergence rates for
			geometrically ergodic Markov chains}}, The Annals of Applied Probability
	\textbf{15} (2005), no.~1B, 700 -- 738.
	
	\bibitem[B{\L}R25]{bell2025adaptivestereographicmcmc}
	Cameron Bell, Krzystof {\L}atuszy{\'n}ski, and Gareth~O. Roberts,
	\emph{Adaptive stereographic MCMC}, arXiv preprint
	\href{https://arxiv.org/abs/2408.11780}{arXiv:2408.11780}, 2025.
	
	\bibitem[Cho60]{chow1960martingale}
	Yuan-Shih~Chow, \emph{A martingale inequality and the law of large numbers},
	Proceedings of the American Mathematical Society \textbf{11} (1960), no.~1,
	107--111.
	
	\bibitem[C{\L}R18]{chimisov2018air}
	Cyril Chimisov, Krzysztof {\L}atuszy{\'n}ski, and Gareth~O. Roberts, \emph{Air
		{M}arkov chain {M}onte {C}arlo}, arXiv preprint
	\href{https://arxiv.org/abs/1801.09309}{arXiv:1801.09309}, 2018.
	
	\bibitem[DMPS18]{douc2018markov}
	Randal Douc, {\'E}ric Moulines, Pierre Priouret, and Philippe Soulier,
	\emph{Markov chains}, Springer, 2018.
	
	\bibitem[Dob96]{dobrushin96}
	Roland~L. Dobrushin, \emph{Lectures on probability theory and statistics: Ecole
		d'et{\'e} de probabilit{\'e}s de {S}aint-{F}lour {XXIV}---1994},
	ch.~Perturbation methods of the theory of Gibbsian fields, pp.~1--66,
	Springer Berlin Heidelberg, Berlin, Heidelberg, 1996.
	
	\bibitem[FM03]{Fort2003convergence}
	Gersende Fort and {\'E}ric Moulines, \emph{{Convergence of the Monte Carlo
			expectation maximization for curved exponential families}}, The Annals of
	Statistics \textbf{31} (2003), no.~4, 1220 -- 1259.
	
	\bibitem[FMP11]{fort2011convergence}
	Gersende Fort, {\'E}ric Moulines, and Pierre Priouret, \emph{Convergence of
		adaptive and interacting {M}arkov chain {M}onte Carlo algorithms}, The
	Annals of Statistics \textbf{39} (2011), no.~6, 3262--3289.
	
	\bibitem[GO02]{glynn2002hoeffding}
	Peter~W. Glynn and Dirk Ormoneit, \emph{Hoeffding's inequality for uniformly
		ergodic {M}arkov chains}, Statistics \& Probability Letters \textbf{56}
	(2002), no.~2, 143--146.
	
	\bibitem[Hai18]{hairer2006ergodic}
	Martin Hairer, \emph{Ergodic properties of Markov processes},
	\href{https://www.hairer.org/notes/Markov.pdf}{Lecture notes} (2018).
	
	\bibitem[HHKR24]{habeck2024geodesicslicesamplingsphere}
	Michael Habeck, Mareike Hasenpflug, Shantanu Kodgirwar, and Daniel Rudolf,
	\emph{Geodesic slice sampling on the sphere}, arXiv preprint
	\href{https://arxiv.org/abs/2301.08056}{arXiv:2301.08056}, 2024.
	
	\bibitem[HJ23]{hosseini2023spectral}
	Bamdad Hosseini and James~E. Johndrow, \emph{{Spectral gaps and error estimates
			for infinite-dimensional {M}etropolis–{H}astings with non-{G}aussian
			priors}}, The Annals of Applied Probability \textbf{33} (2023), no.~3, 1827
	-- 1873.
	
	\bibitem[HLP52]{hardy1952inequalities}
	Godfrey~H. Hardy, John~E. Littlewood, and George P{\'o}lya,
	\emph{Inequalities}, second ed., Cambridge University Press, 1952.
	
	\bibitem[HM11]{hairer2011yet}
	Martin Hairer and Jonathan~C. Mattingly, \emph{Yet another look at {H}arris'
		ergodic theorem for {M}arkov chains}, Seminar on Stochastic Analysis,
	Random Fields and Applications VI (Basel) (Robert Dalang, Marco Dozzi, and
	Francesco Russo, eds.), Springer Basel, 2011, pp.~109--117.
	
	\bibitem[HMS11]{hairer2011asymptotic}
	Martin Hairer, Jonathan~C. Mattingly, and Michael Scheutzow, \emph{Asymptotic
		coupling and a general form of {H}arris’ theorem with applications to
		stochastic delay equations}, Probability Theory and Related Fields
	\textbf{149} (2011), 223--259.
	
	\bibitem[HST01]{haario2001adaptive}
	Heikki Haario, Eero Saksman, and Johanna Tamminen, \emph{An adaptive
		{M}etropolis algorithm}, Bernoulli \textbf{7} (2001), no.~2, 223--242.
	
	\bibitem[HSV14]{hairer2014spectralgaps}
	Martin Hairer, Andrew~M. Stuart, and Sebastian~J. Vollmer, \emph{{Spectral gaps
			for a Metropolis–Hastings algorithm in infinite dimensions}}, The Annals of
	Applied Probability \textbf{24} (2014), no.~6, 2455 -- 2490.
	
	\bibitem[HYL17]{HU2017492}
	Zixi Hu, Zhewei Yao, and Jinglai Li, \emph{{On an adaptive preconditioned
			Crank–Nicolson MCMC algorithm for infinite dimensional Bayesian
			inference}}, Journal of Computational Physics \textbf{332} (2017), 492--503.
	
	\bibitem[JH00]{JARNER2000geometric}
	S{\o}ren~F. Jarner and Ernst Hansen, \emph{Geometric ergodicity of Metropolis
		algorithms}, Stochastic Processes and their Applications \textbf{85} (2000),
	no.~2, 341--361.
	
	\bibitem[JO10]{Joulin2010Curvature}
	Ald{\'e}ric Joulin and Yann Ollivier, \emph{{Curvature, concentration and error
			estimates for Markov chain Monte Carlo}}, The Annals of Probability
	\textbf{38} (2010), no.~6, 2418 -- 2442.
	
	\bibitem[Law14]{LAW2014127}
	Kody J.~H. Law, \emph{Proposals which speed up function-space MCMC}, Journal of
	Computational and Applied Mathematics \textbf{262} (2014), 127--138, Selected
	Papers from NUMDIFF-13.
	
	\bibitem[{\L}MN13]{latuszynski2015nonasymptotic}
	Krzysztof {\L}atuszy{\'n}ski, B{\l}a{\.z}ej Miasojedow, and Wojciech Niemiro,
	\emph{Nonasymptotic bounds on the estimation error of {MCMC} algorithms},
	Bernoulli \textbf{19} (2013), no.~5A, 2033--2066.
	
	\bibitem[{\L}R14]{latuszynski2014adapfail}
	Krzysztof {\L}atuszy{\'n}ski and Jeffrey~S. Rosenthal, \emph{The contianment
		condition and adapfail algorithms}, Journal of Applied Probability
	\textbf{51} (2014), no.~4, 1189--1195.
	
	\bibitem[LV24]{laitinen2024invitationadaptivemarkovchain}
	Pietari Laitinen and Matti Vihola, \emph{An invitation to adaptive Markov chain
		Monte Carlo convergence theory}, arXiv preprint
	\href{https://arxiv.org/abs/2408.14903}{arXiv:2408.14903}, 2024.
	
	\bibitem[Mia14]{MIASOJEDOW2014hoeffding}
	B{\l}a{\.z}ej Miasojedow, \emph{Hoeffding’s inequalities for geometrically
		ergodic {M}arkov chains on general state space}, Statistics \& Probability
	Letters \textbf{87} (2014), 115--120.
	
	\bibitem[MT12]{meyn2012markov}
	Sean~P. Meyn and Richard~L. Tweedie, \emph{Markov chains and stochastic
		stability}, second ed., Cambridge University Press, 2012.
	
	\bibitem[PH{\L}20]{Pompe_2020Framework}
	Emilia Pompe, Chris Holmes, and Krzysztof {\L}atuszy{\'n}ski, \emph{{A
			framework for adaptive MCMC targeting multimodal distributions}}, The Annals
	of Statistics \textbf{48} (2020), no.~5, 2930 -- 2952.
	
	\bibitem[RR07]{roberts2007coupling}
	Gareth~O. Roberts and Jeffrey~S. Rosenthal, \emph{Coupling and ergodicity of
		adaptive Markov chain Monte Carlo algorithms}, Journal of Applied Probability
	\textbf{44} (2007), no.~2, 458--475.
	
	\bibitem[RS18]{rudolf2018generalization}
	Daniel Rudolf and Bj{\"o}rn Sprungk, \emph{On a generalization of the
		preconditioned Crank--Nicolson Metropolis algorithm}, Foundations of
	Computational Mathematics \textbf{18} (2018), 309--343.
	
	\bibitem[Rud12]{rudolf2012explicit}
	Daniel Rudolf, \emph{{Explicit error bounds for Markov chain Monte Carlo}},
	Dissertationes Math. \textbf{485} (2012).
	
	\bibitem[SV10]{saksman2010ergodicity}
	Eero Saksman and Matti Vihola, \emph{{On the ergodicity of the adaptive
			Metropolis algorithm on unbounded domains}}, Annals of Applied Probability
	\textbf{20} (2010), no.~6, 2178--2203.
	
\end{thebibliography}
\end{document}